\newtheorem{theorem}{Theorem}
\newtheorem*{theorem*}{Main Theorem}
\newtheorem{lemma}[theorem]{Lemma}
\newtheorem*{settingRS*}{Setting RS}
\providecommand{\norm}[1]{\lVert#1\rVert}
\numberwithin{equation}{section}
\title{Curve shortening flows on rotational surfaces \\generated by monotone convex functions}
\author{Naotoshi Fujihara}
\date{}
\begin{document}

\maketitle

\begin{abstract}
    In this paper, we study curve shortening flows on rotational surfaces in $\mathbb{R}^3$. We assume that the surfaces have negative Gauss curvatures and that some condition related to the Gauss curvature and the curvature of embedded curve holds on them. Under these assumptions, we prove that the curve remains a graph over the parallels of the rotational surface along the flow. Also, we prove the comparison principle and the long-time existence of the flow.
\end{abstract}


\section{Introduction}
\label{sec:Introduction}

Let $f_t \colon \mathbb{S}^1 \to \overline{M}$, $t \in [0,T)$ be a smooth family of immersions of a circle $\mathbb{S}^1$ into a $2$-dimensional complete oriented Riemannian manifold $(\overline{M},\overline{g})$. 
"Smooth family" means that the map $F \colon \mathbb{S}^1 \times [0,T) \to \overline{M}$ defined by $F(x,t)=f_{t}(x)$ is smooth. 
Let $s$ be an arc-length parameter, then we get $\partial_s = \frac{1}{\norm{df_t(\partial_x)}} \, \partial_x$. 
Set $\mathfrak{t}=d f_t(\partial_s)$, which is a unit tangent vector of $f_t (\mathbb{S}^1)$, we take a unit normal vector $N_t$ of $f_t$ as $\{ \mathfrak{t},N_t \}$ gives the orientation of $\overline{M}$. $\{f_t\}_{t \in [0,T)}$ is called a \textit{curve shortening flow} (CSF) if it is a solution of the equation 
\begin{equation}
    \frac{\partial F}{\partial t} = - \kappa_t \,N_t, \label{eq:CSF}
\end{equation}
where $\kappa_t$ is the curvature of $f_t$ with respect to $-N_t$. 
CSF is a special case of mean curvature flow where the dimension of an immersed manifold is one.
It is known that the equation $(\ref{eq:CSF})$ has a unique solution in short-time. In this paper we study its behavior.

The CSF has been studied on various ambient spaces. In the Euclidean plane, M. Gage and R. Hamilton \cite{gage1986heat} showed that the any embedded convex closed curve shrinks to a round point. After that, M. Grayson \cite{grayson1987heat} proved that the embedded closed curves become convex. From these two results, the any embedded closed curve collapses to a round point. The behaviors of CSF are also studied in surfaces \cite{grayson1989shortening,gage1990curve}, warped product manifolds \cite{zhou2017curve} and compact Riemannian manifolds \cite{ma2007curve}.

In this paper, we consider the case that the ambient space $\overline{M}$ is a rotational surface in $\mathbb{R}^3$ defined by 
\begin{equation*}
    \overline{M}=\{ (r(z) \cos{\theta},r(z) \sin{\theta},z) \,\lvert\, \theta \in [0,2 \pi) , z \in I \,\},
\end{equation*}
where $I=(a,b)$ is an open interval and $r \colon I \to \mathbb{R}$ is a smooth positive function. The metric $\overline{g}$ of $\overline{M}$ is defined by $\overline{g}= \iota^{*}g_{\mathbb{E}^3}$, that is, the metric $\overline{g}$ is induced by the inclusion $\iota \colon \overline{M} \to \mathbb{E}^3$. 
The Gauss curvature is denoted by $\overline{K}$ and the curvatures of $\theta$-curves (given by (\ref{eq:standard curvature})) is denoted by $\mathcal{K}$ and then we consider the following setting:
\begin{settingRS*}
Let $I=(-\infty,b)$ and $J=(- \infty, \widetilde{b}) \subseteq I$. Then for every $z \in J$, $(\overline{M},\overline{g})$ satisfies 
\begin{align*}
0 < \dot{r}(z) \leq \frac{1}{\sqrt{2}}, \quad  \left( 2 \, \mathcal{K}^2 + \overline{K} \right)(z) < 0.
\end{align*}
\end{settingRS*}
The second condition is equivalent to 
\begin{align}
\label{setting:RS}
r(z) \ddot{r}(z) > 2 \dot{r}(z)^2 (\dot{r}(z)^2 +1).     
\end{align}
The examples of $r(z)$ and $J=(-\infty,\widetilde{b})$ that meet Setting RS are below:

\begin{itemize}
    \item When $r(z)=(-z)^{-\alpha}$, $z<0$, $0<\alpha<1$, and $\widetilde{b} = \min\{-(2 \alpha^2)^{\frac{1}{2(\alpha+1)}}, -(\frac{2 \alpha^3}{1 - \alpha})^{\frac{1}{2(\alpha + 1)}}\}$, $(\overline{M},\overline{g})$ satisfies the setting.
    \item When $r(z)=e^{-\frac{1}{z}}$ and $\widetilde{b}$ is a sufficiently small negative number, $(\overline{M},\overline{g})$ satisfies the setting.  
\end{itemize}
If $r(z)=(-z)^{-\alpha}$, $z<0$, $\alpha \geq 1$, then the inequality (\ref{setting:RS}) does not hold (see Figure \ref{fig:r(z)}).


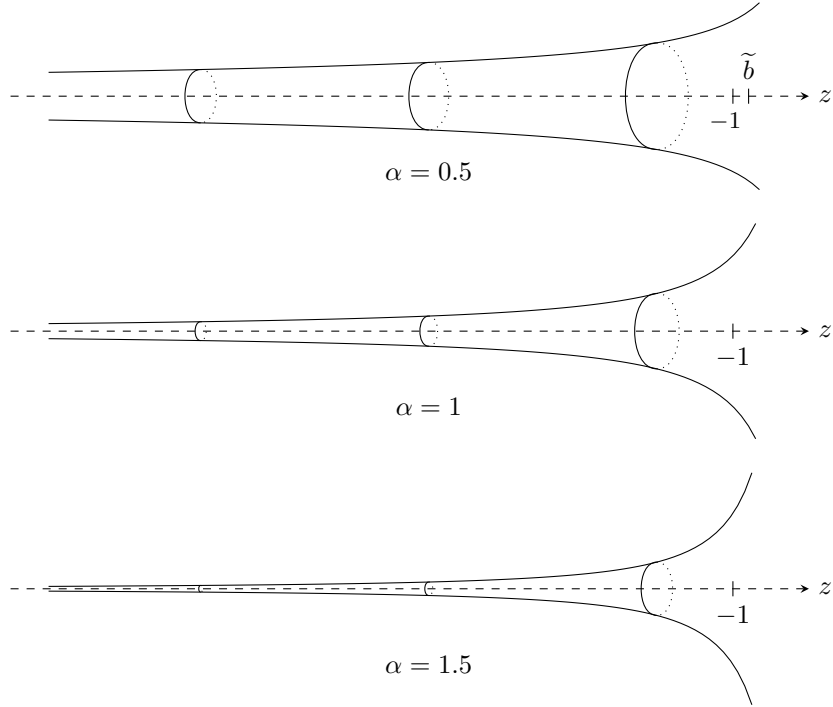
\begin{figure}[htbp]
    \centering

\begin{tikzpicture}[domain=-10:-0.65,samples=100,>=stealth]
\draw[->,dashed] (-10.5,0) -- (0,0) node[right] {$z$};

\draw (-0.7937,-0.1) -- (-0.7937,0.1) node[above] {$\widetilde{b}$};

\draw (-1,0.1) -- (-1,-0.1);
\draw (-1.1,-0.1)  node[below] {\small{\textcolor{black}{$-1$}}};

\draw plot (\x, {pow(-\x,-0.5)});
\draw plot (\x, {-pow(-\x,-0.5)});

\draw (-2,{pow(2,-0.5)}) to [out=180,in=180] (-2,{-pow(2,-0.5)});
\draw[dotted] (-2,{pow(2,-0.5)}) to [out=0,in=0] (-2,{-pow(2,-0.5)});

\draw (-5,{pow(5,-0.5)}) to [out=180,in=180] (-5,{-pow(5,-0.5)});
\draw[dotted] (-5,{pow(5,-0.5)}) to [out=0,in=0] (-5,{-pow(5,-0.5)});

\draw (-5,-1) node[] {$\alpha=0.5$};

\draw (-8,{pow(8,-0.5)}) to [out=180,in=180] (-8,{-pow(8,-0.5)});
\draw[dotted] (-8,{pow(8,-0.5)}) to [out=0,in=0] (-8,{-pow(8,-0.5)});

\end{tikzpicture}

\vspace{4mm}

\begin{tikzpicture}[domain=-10:-0.7,samples=100,>=stealth]
\draw[->,dashed] (-10.5,0) -- (0,0) node[right] {$z$};

\draw (-1,0.1) -- (-1,-0.1) node[below] {$-1$};

\draw plot (\x, {pow(-\x,-1)});
\draw plot (\x, {-pow(-\x,-1)});

\draw (-2,{pow(2,-1)}) to [out=180,in=180] (-2,{-pow(2,-1)});
\draw[dotted] (-2,{pow(2,-1)}) to [out=0,in=0] (-2,{-pow(2,-1)});

\draw (-5,{pow(5,-1)}) to [out=180,in=180] (-5,{-pow(5,-1)});
\draw[dotted] (-5,{pow(5,-1)}) to [out=0,in=0] (-5,{-pow(5,-1)});

\draw (-5,-1) node[] {$\alpha=1$};

\draw (-8,{pow(8,-1)}) to [out=180,in=180] (-8,{-pow(8,-1)});
\draw[dotted] (-8,{pow(8,-1)}) to [out=0,in=0] (-8,{-pow(8,-1)});

\end{tikzpicture}  

\vspace{4mm}

\begin{tikzpicture}[domain=-10:-0.75,samples=100,>=stealth]
\draw[->,dashed] (-10.5,0) -- (0,0) node[right] {$z$};

\draw (-1,0.1) -- (-1,-0.1) node[below] {$-1$};

\draw plot (\x, {pow(-\x,-1.5)});
\draw plot (\x, {-pow(-\x,-1.5)});

\draw (-2,{pow(2,-1.5)}) to [out=180,in=180] (-2,{-pow(2,-1.5)});
\draw[dotted] (-2,{pow(2,-1.5)}) to [out=0,in=0] (-2,{-pow(2,-1.5)});

\draw (-5,{pow(5,-1.5)}) to [out=180,in=180] (-5,{-pow(5,-1.5)});
\draw[dotted] (-5,{pow(5,-1.5)}) to [out=0,in=0] (-5,{-pow(5,-1.5)});

\draw (-5,-1) node[] {$\alpha=1.5$};

\draw (-8,{pow(8,-1.5)}) to [out=180,in=180] (-8,{-pow(8,-1.5)});
\draw[dotted] (-8,{pow(8,-1.5)}) to [out=0,in=0] (-8,{-pow(8,-1.5)});

\end{tikzpicture}
    
    \caption{Rotational surfaces generated by $r(z)=(-z)^{-\alpha}$}
    \label{fig:r(z)}
\end{figure}


In this paper, we will prove the following theorem.
\begin{theorem*}
Let $(\overline{M},\overline{g})$ be a rotational surface that satisfies Setting RS and $\{ f_t \}_{t\in[0,T)}$ be a CSF that satisfies the initial condition
\begin{align}
    \label{initial}
    f_0 \text{ is an embedding and satisfies } z_0(\mathbb{S}^1) \subseteq J \text{ and } u_0 > 0,
\end{align}
where $J$ is the open interval defined in Setting RS and $z_0$, $u_0$ are defined by $(\ref{eq:functions})$. $u_0 > 0$ means that the initial curve is a graph over the parallels of the rotational surface $($see Figure \ref{fig:u_0}$)$. Under these two conditions, we have the followings:
\begin{enumerate}
    \item[$(1)$] $T=\infty$.
    \item[$(2)$] $z_t \to -\infty$ $(t \to \infty)$.
    \item[$(3)$] $\kappa_t \to 0$ $(t \to \infty)$.
\end{enumerate}
\end{theorem*}

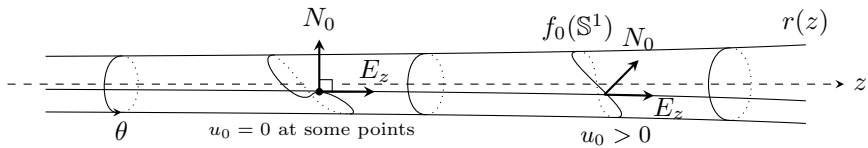
\begin{figure}[htbp]
    \centering

\begin{tikzpicture}[domain=-15:-5,samples=200,>=stealth]
\draw[->,dashed] (-11.1,0) -- (-4.5,0) node[right] {$z$};
\draw[dashed] (-15.5,0) -- (-11.5,0);

\draw plot (\x, {-0.7 + exp(-1/\x)}) node[above] {$r(z)$};
\draw plot (\x, {0.7 - exp(-1/\x)});

\draw plot (\x, {1.0 - exp(-1/\x)});

\draw (-6,{-0.7 + exp(1/6)}) to [out=180,in=180] (-6,{0.7 - exp(1/6)});
\draw (-10,{-0.7 + exp(1/10)}) to [out=180,in=180] (-10,{0.7 - exp(1/10)});
\draw[->] (-14,{-0.7 + exp(1/14)}) to [out=180,in=180] (-14,{0.7 - exp(1/14)}) node[below] {$\theta$};

\draw[dotted] (-6,{-0.7 + exp(1/6)}) to [out=0,in=0] (-6,{0.7 - exp(1/6)});
\draw[dotted] (-10,{-0.7 + exp(1/10)}) to [out=0,in=0] (-10,{0.7 - exp(1/10)});
\draw[dotted] (-14,{-0.7 + exp(1/14)}) to [out=0,in=0] (-14,{0.7 - exp(1/14)});

\draw (-8,{-0.7 + exp(1/8)}) node[above] {$f_0(\mathbb{S}^1)$};
\draw[] (-8,{-0.7 + exp(1/8)}) to [out=180,in=0] (-7.5,{0.7 - exp(1/7.5)});
\draw[dotted] (-8,{-0.7 + exp(1/8)}) to [out=0,in=180] (-7.5,{0.7 - exp(1/7.5)});
\draw  (-7.5,{0.7 - exp(1/7.5)}) node[below] {\small{$u_0 > 0$}};

\draw (-12,{-0.7 + exp(1/12)});
\draw[] (-12,{-0.7 + exp(1/12)}) to [out=180,in=225] (-11.5,{0.95 - exp(1/11.5)});
\draw[] (-11.5,{0.95 - exp(1/11.5)}) to [out=45,in=0] (-11,{0.7 - exp(1/11)});
\draw[dotted] (-12,{-0.7 + exp(1/12)}) to [out=325,in=180] (-11,{0.7 - exp(1/11)});
\draw (-11.5,{0.7 - exp(1/11.5)}) node[below] {\scriptsize{$u_0 = 0$ at some points}};

\fill[black] (-11.4,{0.99 - exp(1/11.5)}) circle (0.05);

\draw[->,thick] (-7.65,{1.0 - exp(1/7.65)}) -- (-7.2,{-0.83 + exp(1/7.2)}) node[above] {$N_0$};
\draw[->,thick] (-7.65,{1.0 - exp(1/7.65)}) -- (-7.0,{1.0 - exp(1/7.0)});
\draw (-6.8,{1.1 - exp(1/7.0)}) node[below] {$E_z$};

\draw[->,thick] (-11.4,{0.99 - exp(1/11.5)}) -- (-11.4,{-0.5 + exp(1/11.5)}) node[above] {$N_0$};
\draw[->,thick] (-11.4,{0.99 - exp(1/11.5)}) -- (-10.67,{0.99 - exp(1/11.5)}) node[above] {$E_z$};
\draw[] (-11.4,{1.15 - exp(1/11.5)}) -- (-11.23,{1.15 - exp(1/11.5)}) -- (-11.23,{0.99 - exp(1/11.5)});
\end{tikzpicture}
    
    \caption{The meaning of $u_0 > 0$}
    \label{fig:u_0}
\end{figure}

This paper is organized as follows. 
We begin with the basic formulae about rotational surfaces in Sect. \ref{sec:basic formulae} and then we compute some evolution equations in Sect. \ref{sec:CSF}. 
Sect. \ref{sec:preserve} contains the proof that the flow remains a graph. Comparison principle and long-time existence are proved in Sect. \ref{sec:comparison principle} and Sect. \ref{sec:long-time existence}.

\section{Basic formulae about rotational surfaces}
\label{sec:basic formulae}

In this section we give the basic geometric quantities of rotational surfaces. 
The covariant derivatives and Gauss curvature are used to compute the evolution equations in Sect. \ref{sec:CSF}. 

Let $\phi(r(z) \cos{\theta}, r(z) \sin{\theta}, z) = (\theta,z)$ be a local coordinate system, then the metric $\overline{g}$ is expressed as $\overline{g}=r(z)^2 \, d\theta^2 + (\dot{r}(z)^2+1) \, d z^2$. 
To compute the covariant derivatives, we will use the local orthonormal frame
\begin{align*}
    E_{\theta}=\frac{1}{r(z)} \, \partial_{\theta}, \quad E_z= \frac{1}{\sqrt{\dot{r}(z)^2+1}} \, \partial_z,
\end{align*}
and $\omega^{\theta}$, $\omega^z$ denote those duals, then we have
$\omega^{\theta} = r \, d \theta$, $\omega^z = \sqrt{\dot{r}^2 + 1} \, d z$.

\begin{lemma}
\label{lem:nabla bar}
For the Levi-Civita connection $\overline{\nabla}$ of $(\overline{M},\overline{g})$, we have
\begin{align*}
    &\overline{\nabla}_{E_{\theta}} E_{\theta} = - \frac{\dot{r}}{r\sqrt{\dot{r}^2 + 1}} \, E_z, \quad \overline{\nabla}_{E_z} E_{\theta} = 0, \\
    &\overline{\nabla}_{E_{\theta}} E_z = \frac{\dot{r}}{r\sqrt{\dot{r}^2 + 1}} \, E_{\theta}, \quad \overline{\nabla}_{E_z} E_z = 0. \\ 
\end{align*}
\end{lemma}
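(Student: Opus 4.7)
The plan is to apply Cartan's first structure equations to the orthonormal coframe $\omega^\theta = r\,d\theta$, $\omega^z = \sqrt{\dot r^2 + 1}\,dz$, read off the single independent connection $1$-form, and then extract the four covariant derivatives via $\overline{\nabla}_{E_i} E_j = \omega^k{}_j(E_i)\,E_k$. Because the frame $\{E_\theta, E_z\}$ is orthonormal, the connection $1$-forms satisfy $\omega^\theta{}_\theta = \omega^z{}_z = 0$ and $\omega^\theta{}_z + \omega^z{}_\theta = 0$, so everything is encoded in $\omega^\theta{}_z$.

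First I would compute $d\omega^\theta = \dot r\, dz \wedge d\theta$ and $d\omega^z = 0$ (the coefficient $\sqrt{\dot r^2 + 1}$ depends only on $z$). The structure equation $d\omega^z = -\omega^z{}_\theta \wedge \omega^\theta$ then gives $\omega^z{}_\theta \wedge \omega^\theta = 0$, forcing $\omega^\theta{}_z$ to be proportional to $\omega^\theta$. Writing $\omega^\theta{}_z = f\,\omega^\theta$ for an unknown function $f$ and substituting into the remaining structure equation $d\omega^\theta = -\omega^\theta{}_z \wedge \omega^z$, together with $\omega^\theta \wedge \omega^z = r\sqrt{\dot r^2 + 1}\, d\theta \wedge dz$, solves directly for $f = \dfrac{\dot r}{r\sqrt{\dot r^2 + 1}}$.

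With $\omega^\theta{}_z = \dfrac{\dot r}{r\sqrt{\dot r^2 + 1}}\,\omega^\theta$ and $\omega^z{}_\theta = -\omega^\theta{}_z$ in hand, evaluating on $E_\theta$ (where $\omega^\theta(E_\theta) = 1$, $\omega^z(E_\theta) = 0$) and on $E_z$ (where $\omega^\theta(E_z) = 0$, $\omega^z(E_z) = 1$) reproduces exactly the four identities claimed. The computation is routine; there is no genuine obstacle, the only point of care being the sign conventions in Cartan's equations. As an alternative, one could equivalently compute the Christoffel symbols of $\overline{g} = r^2\,d\theta^2 + (\dot r^2 + 1)\,dz^2$ from the Koszul formula and convert to the orthonormal basis via $\partial_\theta = r\,E_\theta$ and $\partial_z = \sqrt{\dot r^2 + 1}\,E_z$, arriving at the same formulas with more index bookkeeping.
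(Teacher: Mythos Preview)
Your proposal is correct and follows essentially the same approach as the paper: compute $d\omega^\theta$ and $d\omega^z$, use Cartan's first structure equations together with the antisymmetry $\omega^\theta{}_z = -\omega^z{}_\theta$ to identify $\omega^\theta{}_z = \dfrac{\dot r}{r\sqrt{\dot r^2+1}}\,\omega^\theta$, and then read off the four covariant derivatives. The only cosmetic difference is that the paper expresses $d\omega^\theta$ directly in the orthonormal coframe rather than first in $d\theta, dz$, but the computation is otherwise identical.
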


\begin{proof}
We compute the Cartan connection forms $\omega^{\theta}{}_{z}$, $\omega^{z}{}_{\theta}$ that satisfy
\begin{align*}
    d \omega^{\theta} = - \omega^{\theta}{}_{z} \wedge \omega^z, \quad
    d \omega^{z} = - \omega^{z}{}_{\theta} \wedge \omega^{\theta}, \quad \omega^{\theta}{}_{z}=-\omega^{z}{}_{\theta}.
\end{align*}
The exterior derivatives of $\omega^{\theta}$, $\omega^z$ are
\begin{align*}
    d \omega^{\theta} = - \frac{\dot{r}}{r \sqrt{\dot{r}^2 + 1}} \, \omega^{\theta} \wedge \omega^z, \quad d \omega^z = 0,
\end{align*}
thus we obtain $\omega^{\theta}{}_{z} = \frac{\dot{r}}{r \sqrt{\dot{r}^2 + 1}} \, \omega^{\theta}$. By using $\overline{\nabla}_{X} E_{\theta} = \omega^{z}{}_{\theta}(X) \, E_z$, $\overline{\nabla}_{X} E_{z} = \omega^{\theta}{}_{z}(X) \, E_{\theta}$, we can get the formulae. 
\end{proof}

\begin{lemma}
For Levi-Civita connection $\overline{\nabla}$ of $(\overline{M},\overline{g})$, the principal curvatures with respect to the outward unit normal vector are
\begin{align*}
    -\frac{1}{r\sqrt{\dot{r}^2 + 1}}, \quad \frac{\ddot{r}}{(\dot{r}^2 + 1)^{\frac{3}{2}}},
\end{align*}
and the Gauss curvature $\overline{K}$ satisfies
\begin{align*}
    \overline{K} = \overline{\mathrm{Ric}}(\overline{v},\overline{v})=-\frac{\ddot{r}}{r(\dot{r}^2 + 1)^2}
\end{align*}
where $\overline{v}$ is the unit tangent vector of $(\overline{M},\overline{g})$. $\overline{K}$ does not depend on $\theta$, so we sometimes write $\overline{K}(z)$. 
\end{lemma}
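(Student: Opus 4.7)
The plan is to read off the principal curvatures from the second fundamental form in the parameterization $X(\theta,z) = (r(z)\cos\theta, r(z)\sin\theta, z)$ and then use the fact that in two dimensions the Gauss curvature is the product of the principal curvatures (and equals the Ricci curvature evaluated on any unit tangent vector).

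First I would compute the coordinate tangent vectors $X_\theta = (-r\sin\theta, r\cos\theta, 0)$ and $X_z = (\dot r\cos\theta, \dot r\sin\theta, 1)$ in $\mathbb{R}^3$, together with the outward unit normal
\[
\nu \;=\; \frac{1}{\sqrt{\dot r^2+1}}\,(\cos\theta,\sin\theta,-\dot r),
\]
obtained (up to sign) from $X_\theta\times X_z$ and checked to point away from the rotation axis. The first fundamental form is diagonal with $E = r^2$, $F=0$, $G=\dot r^2+1$, matching the metric already written down.

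Next I would compute the second derivatives $X_{\theta\theta}$, $X_{\theta z}$, $X_{zz}$ and take inner products with $\nu$ to obtain the second fundamental form coefficients
\[
e \;=\; \frac{-r}{\sqrt{\dot r^2+1}}, \qquad f \;=\; 0, \qquad g \;=\; \frac{\ddot r}{\sqrt{\dot r^2+1}}.
\]
Since both fundamental forms are diagonal in the $(\theta,z)$ coordinates, the $\theta$- and $z$-directions are principal directions and the principal curvatures are simply $e/E$ and $g/G$, which gives exactly $-1/(r\sqrt{\dot r^2+1})$ and $\ddot r/(\dot r^2+1)^{3/2}$.

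Finally, the Gauss curvature of a surface in $\mathbb{R}^3$ is the product of the principal curvatures, yielding
\[
\overline K \;=\; -\frac{\ddot r}{r(\dot r^2+1)^2},
\]
and in dimension two the Ricci tensor applied to any unit tangent vector $\overline v$ coincides with the sectional (Gauss) curvature, so $\overline{\mathrm{Ric}}(\overline v,\overline v)=\overline K$. There is no real obstacle here beyond careful sign bookkeeping in choosing the outward normal; once that orientation is fixed, the principal curvatures and $\overline K$ follow by direct computation.
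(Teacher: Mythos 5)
Your computation is correct: the coefficients $E=r^2$, $F=0$, $G=\dot r^2+1$, $e=-r/\sqrt{\dot r^2+1}$, $f=0$, $g=\ddot r/\sqrt{\dot r^2+1}$ all check out with the outward normal $\nu=(\cos\theta,\sin\theta,-\dot r)/\sqrt{\dot r^2+1}$, and the stated principal curvatures, their product, and the identification $\overline{\mathrm{Ric}}(\overline v,\overline v)=\overline K$ in dimension two follow exactly as you say. The paper states this lemma without proof, and your argument is the standard extrinsic verification one would supply.
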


\section{Curve shortening flow and the evolution equations}
\label{sec:CSF}

We shall calculate the evolution equations in this section and give an example of CSF. 

Let $f_t \colon \mathbb{S}^1 \to \overline{M}$, $t \in [0,T)$ be a smooth family of immersions that satisfies the equation (\ref{eq:CSF}).
The Levi-Civita connection induced on $\mathbb{S}^1$ by $f_t$ is denoted by $\nabla$, 
and $\overline{\nabla}^F$ denotes the connection of $\overline{\nabla}$ induced by $F$.
Let us write $\overline{g} = \langle,\rangle$, then we have 
\begin{align*}
&N=\langle N,E_{\theta} \rangle \, E_{\theta} + \langle N, E_z \rangle \, E_z, \\
&\mathfrak{t}=\langle N,E_z \rangle \, E_{\theta} - \langle N,E_{\theta} \rangle \, E_z.
\end{align*}
The followings are derived from the definitions.

\begin{lemma}
\begin{align}
    &\nabla_{\partial_s} \partial_s = 0, \label{eq:nabla0} \\ 
    &\overline{\nabla}^{F}_{\partial_s} N = \kappa \, \mathfrak{t}, \quad \overline{\nabla}^{F}_{\partial_s} \mathfrak{t} = - \kappa \, N. \notag
\end{align}
\end{lemma}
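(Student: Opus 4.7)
The three identities are all first-principles consequences of (i) the fact that $s$ is arc-length and (ii) the orthonormality of the moving frame $\{\mathfrak{t},N\}$ along each $f_t$, so the plan is essentially bookkeeping rather than substantive geometry. My plan is to treat the first identity separately on the intrinsic side and then derive the two Frenet-type identities in parallel on the ambient side.

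For $\nabla_{\partial_s}\partial_s = 0$, I would work with the pullback metric $f_t^{*}\overline{g}$ on $\mathbb{S}^1$, whose single component in the $s$-coordinate is $g_{ss}=\langle\partial_s,\partial_s\rangle=\|df_t(\partial_s)\|^2=1$ by construction of the arc-length parameter. The only Christoffel symbol is
\begin{equation*}
\Gamma^{s}_{ss}=\tfrac{1}{2}g^{ss}\,\partial_s g_{ss}=0,
\end{equation*}
so $\nabla_{\partial_s}\partial_s=\Gamma^{s}_{ss}\partial_s=0$. (Alternatively, one can note that any arc-length parametrized curve in a one-dimensional Riemannian manifold is automatically a geodesic.)

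For the two remaining identities, I would use that $\{\mathfrak{t},N\}$ is an orthonormal frame of $F^{*}T\overline{M}$ along the curve. Differentiating $\langle \mathfrak{t},\mathfrak{t}\rangle=1$ and $\langle N,N\rangle=1$ along $\partial_s$ and using metric compatibility of $\overline{\nabla}^F$ yields $\overline{\nabla}^{F}_{\partial_s}\mathfrak{t}\perp\mathfrak{t}$ and $\overline{\nabla}^{F}_{\partial_s}N\perp N$, so both vectors live in the respective one-dimensional complements: there exist scalars $a,b$ with $\overline{\nabla}^{F}_{\partial_s}\mathfrak{t}=aN$ and $\overline{\nabla}^{F}_{\partial_s}N=b\mathfrak{t}$. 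The definition of $\kappa_t$ as the curvature of $f_t$ with respect to $-N_t$ (recalled from the Introduction) identifies $a=-\kappa$, giving the second formula. Then differentiating $\langle \mathfrak{t},N\rangle=0$ and substituting produces $b+a=0$, hence $b=\kappa$, which is the first formula.

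The only point that requires a moment of care is the sign convention: the curvature is defined against $-N$, not $N$, so I have to track that minus sign consistently when reading off $a$ and $b$. Once that is done, no computation specific to the rotational-surface setting of Sect.~\ref{sec:basic formulae} is needed, and the lemma follows immediately.
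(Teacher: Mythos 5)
Your proof is correct: the vanishing of the Christoffel symbol in the arc-length coordinate gives the first identity, and the Frenet-type argument (metric compatibility plus the sign convention $\kappa=\langle\overline{\nabla}^F_{\partial_s}\mathfrak{t},-N\rangle$) gives the other two, consistent with the formula $\kappa=\langle\overline{\nabla}^f_{\partial_s}N,\mathfrak{t}\rangle$ used later in Lemma \ref{lem:graph curvature}. The paper offers no proof beyond "derived from the definitions," and your argument is exactly the standard derivation it intends.
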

By (\ref{eq:nabla0}), the laplacian $\Delta$ on $(\mathbb{S}^1,f_{t}^{*}\,\overline{g})$ satisfies $\Delta=\partial_s \partial_s$.
We then define two functions on $\overline{M}$.
\begin{align}
\label{eq:functions}
    z_t = \pi_z \circ f_t, \quad u_t = \langle N_t, E_z \rangle,
\end{align}
where $\pi_z \colon \overline{M} \to \mathbb{R}$ denotes the projection to $z$ component.
The curvatures of $\theta$-curves $\mathcal{K}(z)$ are given by
\begin{equation}
    \mathcal{K}(z) = \frac{\dot{r}(z)}{r(z)\sqrt{\dot{r}(z)^2+1}}. \label{eq:standard curvature}
\end{equation}

\begin{lemma}
\label{lem:laplacian}

With the above notation, we have the following formulae
\begin{align*}
    &\partial_s u = (-\kappa  + \mathcal{K} u ) \, \langle N, E_{\theta} \rangle , \quad
    \partial_s \langle N, E_{\theta} \rangle = -u \, \left( -\kappa \,  + \mathcal{K} \, u \right),\\
    &\partial_s z = d\pi_z(\mathfrak{t}) = -\frac{1}{\sqrt{\dot{r}^2 +1}} \, \langle N,E_{\theta} \rangle, \quad  \dot{\mathcal{K}}(z) = -\sqrt{\dot{r}^2 +1} \left( \mathcal{K}^2(z) + \overline{K}(z) \right),\\
    &\Delta u = -\partial_s \kappa \, \langle N,E_{\theta} \rangle -\mathcal{K} \, \kappa \, (1-u^2) +\left( 2 \mathcal{K}^2 + \overline{K} \right)(u-u^3) -u \, (\mathcal{K} u - \kappa)^2,\\
    &\Delta z = -\frac{\dot{r} \, \ddot{r}}{(\dot{r}^2+1)^2} \, (1-u^2) - \frac{1}{\sqrt{\dot{r}^2 + 1}} \, (\kappa \, u - \mathcal{K} \, u^2). \\
\end{align*}
\end{lemma}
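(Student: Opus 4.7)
The plan is to compute every formula by direct application of the structural equations: $\overline{\nabla}^F_{\partial_s}N = \kappa\,\mathfrak{t}$, $\overline{\nabla}^F_{\partial_s}\mathfrak{t} = -\kappa\,N$, together with Lemma \ref{lem:nabla bar} and the identity $\mathfrak{t} = u\,E_\theta - \langle N,E_\theta\rangle\,E_z$ that comes from the orthogonality of $\{\mathfrak{t},N\}$. The constant length of $N$ also yields $\langle N,E_\theta\rangle^2 + u^2 = 1$, which will be used at the end to pass from $\langle N,E_\theta\rangle^2$ to $1-u^2$.

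I would first derive the two first-order identities by expanding
\begin{equation*}
\partial_s u = \langle \overline{\nabla}^F_{\partial_s}N,E_z\rangle + \langle N,\overline{\nabla}_{\mathfrak{t}}E_z\rangle,\qquad
\partial_s\langle N,E_\theta\rangle = \langle \overline{\nabla}^F_{\partial_s}N,E_\theta\rangle + \langle N,\overline{\nabla}_{\mathfrak{t}}E_\theta\rangle,
\end{equation*}
and substituting Lemma \ref{lem:nabla bar}, which converts $\overline{\nabla}_{E_\theta}E_z$ and $\overline{\nabla}_{E_\theta}E_\theta$ into multiples of $\mathcal{K}$. For $\partial_s z$, I simply note $\partial_s z = d\pi_z(\mathfrak{t})$ and use $d\pi_z(E_\theta)=0$, $d\pi_z(E_z) = (\dot r^2+1)^{-1/2}$. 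For $\dot{\mathcal{K}}$, I differentiate the explicit expression $\mathcal{K}(z) = \dot r/(r\sqrt{\dot r^2+1})$ in $z$ and check, using $\overline{K} = -\ddot r/(r(\dot r^2+1)^2)$, that the result equals $-\sqrt{\dot r^2+1}\,(\mathcal{K}^2+\overline{K})$; this is a one-line algebraic verification.

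The second-order formulae follow from reapplying $\partial_s$ to what has just been established, with the chain rule $\partial_s\mathcal{K} = \dot{\mathcal{K}}\,\partial_s z = (\mathcal{K}^2 + \overline{K})\,\langle N,E_\theta\rangle$ (the $\sqrt{\dot r^2+1}$ factor in $\dot{\mathcal{K}}$ cancels against the one in $\partial_s z$). For $\Delta u$, I write
\begin{equation*}
\Delta u = \partial_s\bigl[(-\kappa+\mathcal{K}u)\langle N,E_\theta\rangle\bigr] = [\partial_s(-\kappa+\mathcal{K}u)]\,\langle N,E_\theta\rangle + (-\kappa+\mathcal{K}u)\,\partial_s\langle N,E_\theta\rangle,
\end{equation*}
expand $\partial_s(-\kappa+\mathcal{K}u)$ using the chain rule for $\mathcal{K}$ and the formula for $\partial_s u$, and then replace the resulting $\langle N,E_\theta\rangle^2$ by $1-u^2$. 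The $-u(\mathcal{K}u-\kappa)^2$ term comes cleanly out of the second summand, and the combinations $-\mathcal{K}\kappa(1-u^2)$ and $(2\mathcal{K}^2+\overline{K})(u-u^3)$ appear after grouping the $\mathcal{K}^2(u-u^3)$ contributions from $\partial_s\mathcal{K}\cdot u$ and from $\mathcal{K}\,\partial_s u$. The computation of $\Delta z$ is analogous but shorter: the only mildly delicate point is differentiating the coefficient $-(\dot r^2+1)^{-1/2}$ with respect to $z$, which produces the $\dot r\ddot r/(\dot r^2+1)^2$ factor.

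The main obstacle is purely bookkeeping: tracking the $\mathcal{K}^2(u-u^3)$ terms in $\Delta u$ (one comes from $\partial_s\mathcal{K}$, another from $\mathcal{K}\cdot\partial_s u$) so that they correctly combine with the $\overline{K}(u-u^3)$ term produced by $\dot{\mathcal{K}}$ to form the coefficient $2\mathcal{K}^2 + \overline{K}$. There is no conceptual difficulty and no use of the flow equation \eqref{eq:CSF} beyond the two structural identities in the preceding lemma, so everything reduces to careful algebra once the substitution $\langle N,E_\theta\rangle^2 = 1-u^2$ is invoked.
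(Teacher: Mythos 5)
Your proposal is correct and follows essentially the same route as the paper: first-order identities from the structure equations $\overline{\nabla}^F_{\partial_s}N=\kappa\,\mathfrak{t}$ and Lemma \ref{lem:nabla bar}, then $\partial_s\mathcal{K}=(\mathcal{K}^2+\overline{K})\,\langle N,E_\theta\rangle$, then a second application of $\partial_s$ with the substitution $\langle N,E_\theta\rangle^2=1-u^2$. The only cosmetic difference is that the paper obtains $\partial_s\langle N,E_\theta\rangle$ by differentiating the identity $u^2+\langle N,E_\theta\rangle^2=1$ and reusing the formula for $\partial_s u$, rather than by a second direct covariant-derivative computation; both are one-line steps giving the same result.
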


\begin{proof}
First we have 
\begin{align*}
    \partial_s u 
    &= \langle  \overline{\nabla}^{F}_{\partial_s} N , E_z \rangle + \langle N, \overline{\nabla}^{F}_{\partial_s} E_z \rangle \\
    &= \kappa \, \langle \mathfrak{t}, E_z \rangle + \langle N, \overline{\nabla}_{\mathfrak{t}} E_z \rangle \\
    &= -\kappa \, \langle N, E_{\theta} \rangle + u \, \langle N, \overline{\nabla}_{E_{\theta}}E_z \rangle \\
    &= \left( -\kappa + \mathcal{K} \, u \right) \langle N, E_{\theta} \rangle.
\end{align*}
Then, from the equation $u^2 + \langle N, E_{\theta} \rangle^2 = 1$, we can calculate
\begin{align*}
    2 \langle N, E_{\theta} \rangle \, \partial_s \langle N, E_{\theta} \rangle = - 2 u \, \partial_s u = -2 u  \left( -\kappa + \mathcal{K} \, u \right) \langle N, E_{\theta} \rangle,
\end{align*}
then $\partial_s \langle N, E_{\theta} \rangle$ is obtained.
$\partial_s z$ and $\dot{\mathcal{K}}$ are followed by straightforward computations. We combine them and get $\partial_s \mathcal{K} = (\mathcal{K}^2 + \overline{K}) \langle N, E_{\theta} \rangle$.
Using the above equations, we get
\begin{align*}
    \Delta u 
    &= \partial_s \partial_s u \\
    &= \partial_s \left\{ \left( -\kappa + \mathcal{K} \, u \right) \langle N, E_{\theta} \rangle \right\} \\
    &= -\partial_s \kappa \, \langle N, E_{\theta} \rangle + \left( \mathcal{K}^2 + \overline{K} \right) u \left( 1 - u^2 \right) +\mathcal{K}\left( -\kappa + \mathcal{K} \, u \right) \left( 1 - u^2 \right)  - u \, \left( - \kappa + \mathcal{K} \, u \right)^2 \\
    &= -\partial_s \kappa \, \langle N,E_{\theta} \rangle -\mathcal{K} \, \kappa \, (1-u^2) +\left( 2 \mathcal{K}^2 + \overline{K} \right)(u-u^3) -u \, (\mathcal{K} u - \kappa)^2,
\end{align*}
and 
\begin{align*}
    \Delta z 
    &= \partial_s \partial_s z \\
    &= \partial_s \left\{ -\frac{1}{\sqrt{\dot{r}^2 +1}} \, \langle N,E_{\theta} \rangle  \right\} \\
    &= \frac{\dot{r} \ddot{r}}{(\dot{r}^2 +1)^{\frac{3}{2}}} \, \partial_s z \, \langle N, E_{\theta} \rangle - \frac{1}{\sqrt{\dot{r}^2 +1}} \left\{ -u \, \left( -\kappa \,  + \mathcal{K} \, u \right) \right\} \\
    &= -\frac{\dot{r} \, \ddot{r}}{(\dot{r}^2+1)^2} \, (1-u^2) - \frac{1}{\sqrt{\dot{r}^2 + 1}} \, (\kappa \, u - \mathcal{K} \, u^2).
\end{align*}
\end{proof}

Next we compute the evolution equations.

\begin{lemma}
\label{lem:evolution eq}
We have the evolution equations
\begin{align*}
    &\left(\partial_t - \Delta\right)z = \frac{\dot{r}\ddot{r}}{(\dot{r}^2 + 1)^2} \, (1-u^2) - \frac{\dot{r}}{r(\dot{r}^2 +1)} \, u^2, \\
    &\left(\partial_t - \Delta\right)u= -\left( 2 \mathcal{K}^2 + \overline{K} \right)(u - u^3) +u \, (\mathcal{K} u - \kappa)^2 ,\\
    &\left(\partial_t - \Delta\right)\kappa =  \kappa \, (\kappa^2 + \overline{K}), \quad
    \left(\partial_t - \Delta\right)\kappa^2 = -2(\partial_s \kappa)^2 + 2\kappa^2 \, (\kappa^2 + \overline{K}).
\end{align*}
\end{lemma}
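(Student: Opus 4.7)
The plan is to compute $\partial_t$ of each quantity and subtract the Laplacian expressions from Lemma \ref{lem:laplacian}. The key preliminary is the commutator $[\partial_t,\partial_s]=\kappa^2\,\partial_s$, obtained from the speed $v=\norm{dF(\partial_x)}$ via $\partial_t v^2=2\langle\overline{\nabla}^F_{\partial_x}dF(\partial_t),dF(\partial_x)\rangle=-2\kappa^2 v^2$. Writing $\mathfrak{t}=v^{-1}dF(\partial_x)$ and using $\overline{\nabla}^F_{\partial_t}dF(\partial_x)=\overline{\nabla}^F_{\partial_x}(-\kappa N)$, the correction $-v^{-1}(\partial_t v)\,\mathfrak{t}=\kappa^2\,\mathfrak{t}$ cancels against a $-\kappa^2\,\mathfrak{t}$ contribution from $\overline{\nabla}^F_{\partial_x}(-\kappa N)$, leaving $\overline{\nabla}^F_{\partial_t}\mathfrak{t}=-(\partial_s\kappa)\,N$ and hence $\overline{\nabla}^F_{\partial_t}N=(\partial_s\kappa)\,\mathfrak{t}$ by orthonormality of $\{\mathfrak{t},N\}$.

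For $z$, a direct computation using $\partial_t F=-\kappa N$, $d\pi_z(E_\theta)=0$ and $d\pi_z(E_z)=1/\sqrt{\dot r^2+1}$ gives $\partial_t z=-\kappa u/\sqrt{\dot r^2+1}$; subtracting $\Delta z$ and using $\mathcal{K}/\sqrt{\dot r^2+1}=\dot r/(r(\dot r^2+1))$ yields the claim. For $u$, I expand $\partial_t u=\langle\overline{\nabla}^F_{\partial_t}N,E_z\rangle+\langle N,\overline{\nabla}^F_{\partial_t}E_z\rangle$: the first term equals $-(\partial_s\kappa)\,\langle N,E_\theta\rangle$ since $\langle\mathfrak{t},E_z\rangle=-\langle N,E_\theta\rangle$, and the second, after writing $\overline{\nabla}^F_{\partial_t}E_z=-\kappa\,\overline{\nabla}_N E_z=-\kappa\,\mathcal{K}\,\langle N,E_\theta\rangle\,E_\theta$ via Lemma \ref{lem:nabla bar}, equals $-\kappa\,\mathcal{K}\,(1-u^2)$. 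Subtracting $\Delta u$ cancels the $\partial_s\kappa$ and $\mathcal{K}\kappa(1-u^2)$ contributions, leaving exactly $-(2\mathcal{K}^2+\overline K)(u-u^3)+u(\mathcal{K}u-\kappa)^2$.

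For $\kappa$, I write $\kappa=-\langle\overline{\nabla}^F_{\partial_s}\mathfrak{t},N\rangle$; differentiating in $t$, the term involving $\overline{\nabla}^F_{\partial_t}N$ drops because $\langle-\kappa N,(\partial_s\kappa)\mathfrak{t}\rangle=0$, so $\partial_t\kappa=-\langle\overline{\nabla}^F_{\partial_t}\overline{\nabla}^F_{\partial_s}\mathfrak{t},N\rangle$. Using the commutation $\overline{\nabla}^F_{\partial_t}\overline{\nabla}^F_{\partial_s}-\overline{\nabla}^F_{\partial_s}\overline{\nabla}^F_{\partial_t}=\overline{R}(dF(\partial_t),dF(\partial_s))+\overline{\nabla}^F_{[\partial_t,\partial_s]}$, together with the two-dimensional identity $\overline{R}(X,Y)Z=\overline{K}(\langle Y,Z\rangle X-\langle X,Z\rangle Y)$, which gives $\overline{R}(-\kappa N,\mathfrak{t})\mathfrak{t}=-\kappa\overline{K}N$, and the direct computation $\overline{\nabla}^F_{\partial_s}\overline{\nabla}^F_{\partial_t}\mathfrak{t}=-(\Delta\kappa)N-\kappa(\partial_s\kappa)\mathfrak{t}$ and $\overline{\nabla}^F_{\kappa^2\partial_s}\mathfrak{t}=-\kappa^3 N$, reading off the $N$-component delivers $\partial_t\kappa=\Delta\kappa+\kappa(\kappa^2+\overline K)$. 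The $\kappa^2$ evolution is then immediate from $\partial_t\kappa^2=2\kappa\,\partial_t\kappa$ and $\Delta\kappa^2=2\kappa\Delta\kappa+2(\partial_s\kappa)^2$.

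The main obstacle is the $\kappa$ evolution: one must correctly track both the time-dependence of $v$ hidden in $[\partial_t,\partial_s]$ and the ambient curvature term $\overline{R}(dF(\partial_t),dF(\partial_s))\mathfrak{t}$ in order to produce exactly the $\overline K$ contribution. The remaining formulas are essentially algebraic bookkeeping that fall out quickly once $\overline{\nabla}^F_{\partial_t}N=(\partial_s\kappa)\mathfrak{t}$ is available and Lemma \ref{lem:laplacian} is in hand.
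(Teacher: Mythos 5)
Your proposal is correct and follows essentially the same route as the paper: compute $\partial_t z$ and $\partial_t u$ directly and subtract the Laplacians from Lemma \ref{lem:laplacian}, with all signs and terms checking out. The only difference is that for $(\partial_t-\Delta)\kappa$ the paper simply cites the standard mean curvature flow evolution equation, whereas you derive it from scratch via the commutator $[\partial_t,\partial_s]=\kappa^2\partial_s$ and the Gauss curvature term; that derivation is also correct.
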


\begin{proof}
$\kappa$ is the same as the mean curvature of $f_t$, so the formulae follow the ones of mean curvature flow.
The others, it is enough to calculate the derivatives with respect to time:
\begin{align}
    &\partial_t z = d \pi_{z} (- \kappa \, N) = - \frac{\kappa \, u}{\sqrt{\dot{r}^2 + 1}}, \label{eq:timediff_z} \\
    &\partial_t u = - \partial_s \kappa \, \langle N, E_{\theta} \rangle - \mathcal{K} \, \kappa \, (1-u^2). \notag
\end{align}
Combined with the Lemma $\ref{lem:laplacian}$, the equations are obtained.
\end{proof}


At the end of this section, we prepare a lemma before we give one example of CSF.

\begin{lemma}
\label{lem:graph curvature}
Let $f \colon \mathbb{S}^1 \to \overline{M}$ be immersed as a graph in some neighborhood of $x_{0} \in \mathbb{S}^1$. This means that $\phi \circ f \circ \psi^{-1} (x) = (x,z(x))$ holds by using some local coordinate system $\psi$ about $x_0$. 
Under this assumption, the curvature $\kappa$ of $f$ is written as
\begin{align*}
    \kappa = \frac{1}{(r^2 + (\dot{r}^2 +1) \dot{z}^2)^{\frac{3}{2}}} 
    \left\{ \frac{\dot{r} \, \dot{z}^2}{\dot{r}^2 +1} \left( 2 \, (\dot{r}^2 + 1) - r \, \ddot{r} \right) + \frac{r}{\sqrt{\dot{r}^2 + 1}} \left( r \, \dot{r} - \ddot{z} \, (\dot{r}^2 + 1) \right) \right\}.
\end{align*}
\end{lemma}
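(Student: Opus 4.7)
The proof is a direct computation from the definition of $\kappa$ and the connection formulas of Lemma \ref{lem:nabla bar}, combined with the graph data from the coordinate expression $\phi \circ f \circ \psi^{-1}(x) = (x, z(x))$.

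First I would read off the unit tangent and normal in the orthonormal frame. Since $\partial_\theta = r E_\theta$ and $\partial_z = \sqrt{\dot{r}^2+1}\, E_z$, the pushforward is $df(\partial_x) = r E_\theta + \dot{z}\sqrt{\dot{r}^2+1}\, E_z$, whose length is $W := \sqrt{r^2 + (\dot{r}^2+1)\dot{z}^2}$. Hence $\partial_s = W^{-1}\partial_x$, $\mathfrak{t} = W^{-1}(r E_\theta + \dot{z}\sqrt{\dot{r}^2+1}\, E_z)$, and the orientation convention $\{\mathfrak{t},N\}$ forces $N = W^{-1}(-\dot{z}\sqrt{\dot{r}^2+1}\, E_\theta + r E_z)$. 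In particular $u = \langle N, E_z\rangle = r/W$ and $1-u^2 = (\dot{r}^2+1)\dot{z}^2/W^2$.

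Next I would compute $\kappa = -\langle \overline{\nabla}^F_{\partial_s}\mathfrak{t}, N\rangle = -W^{-1}\langle \overline{\nabla}^F_{\partial_x}\mathfrak{t}, N\rangle$. Expanding $\overline{\nabla}^F_{\partial_x}\mathfrak{t}$ via the product rule, the coefficient derivatives contribute terms such as $\partial_x(r/W)$ and $\partial_x(\dot{z}\sqrt{\dot{r}^2+1}/W)$, while Lemma \ref{lem:nabla bar} gives $\overline{\nabla}_{df(\partial_x)} E_\theta = -r\mathcal{K}\, E_z$ and $\overline{\nabla}_{df(\partial_x)} E_z = r\mathcal{K}\, E_\theta$ with $r\mathcal{K} = \dot{r}/\sqrt{\dot{r}^2+1}$. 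Pairing the result with $N$ then expresses $\kappa$ as a rational function of $r,\dot{r},\ddot{r},\dot{z},\ddot{z}$. An equivalent shortcut is to substitute $u = r/W$ into the identity for $\Delta z$ from Lemma \ref{lem:laplacian} and equate with the direct computation $\Delta z = \partial_s^2 z = W^{-1}\partial_x(\dot{z}/W)$; since Lemma \ref{lem:laplacian} makes the dependence on $\kappa$ linear, this gives a single equation solvable for $\kappa$.

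The main obstacle is purely bookkeeping: after writing $\partial_x W$ via the chain rule, one must expand, collect everything over the common denominator $W^3$, and invoke $W^2 = r^2 + (\dot{r}^2+1)\dot{z}^2$ at the right moment to combine the summands into the two bracketed factors displayed in the statement. Keeping the various powers of $\sqrt{\dot{r}^2+1}$ straight is the only thing that requires care; nothing conceptually interesting arises.
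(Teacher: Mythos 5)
Your proposal is correct and follows essentially the same route as the paper: introduce $w=\sqrt{r^2+(\dot r^2+1)\dot z^2}$, write $\mathfrak{t}$ and $N$ in the orthonormal frame exactly as you do, and compute the curvature from the frame derivatives of Lemma \ref{lem:nabla bar} together with the $x$-derivatives of the coefficients (the paper pairs $\overline{\nabla}^{f}_{\partial_s}N$ with $\mathfrak{t}$ rather than $\overline{\nabla}^{f}_{\partial_s}\mathfrak{t}$ with $N$, which is the same computation up to a sign identity). The remaining work in both versions is the identical algebraic bookkeeping over the denominator $w^3$.
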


\begin{proof}
Let $w = \norm{d f (\partial_x)}=\sqrt{r^2 + (\dot{r}^2 + 1) \, \dot{z}^2}$ , we can write 
\begin{align*}
    &\mathfrak{t} = \frac{1}{w} \left( r \, E_{\theta} + \dot{z} \, \sqrt{\dot{r}^2 +1} \, E_z \right), \\
    & N = \frac{1}{w} \left( - \dot{z} \sqrt{\dot{r}^2 +1} \, E_{\theta} + r \, E_z \right).
\end{align*}
Moreover, by using the equations below, we can compute the curvature $\kappa$:
\begin{align*}
    \kappa = \langle \overline{\nabla}^{f}_{\partial_s} N, \mathfrak{t} \rangle, \quad \partial_s r=\frac{\dot{r} \, \dot{z}}{w}, \quad 
    \partial_s \left( \dot{z} \, \sqrt{\dot{r}^2 +1} \right) = \frac{1}{w} \left( \ddot{z} \, \sqrt{\dot{r}^2 + 1} + \frac{\dot{r} \, \ddot{r} \, \dot{z}^2}{\sqrt{\dot{r}^2 + 1}} \right).
\end{align*}
\end{proof}

Now let us define $F \colon \mathbb{S}^1 \times [0,T) \to \overline{M}$ as $\phi \circ F(x,t) = (x,z(t))$, and we can write the curvature $\kappa$ as  
\begin{align*}
    \kappa(x,t)= \frac{\dot{r}(z(t))}{r(z(t))\sqrt{\dot{r}(z(t))^2 + 1}} = \mathcal{K}(z(t))
\end{align*}
by using $\dot{z}=\ddot{z}=0$ and Lemma \ref{lem:graph curvature}.
Thus the equation (\ref{eq:CSF}) and 
\begin{align}
    \frac{d z}{d t}(t) = - \frac{\dot{r}(z(t))}{r(z(t))(\dot{r}(z(t))^2 + 1)} \label{eq:standard flow}
\end{align}
are equivalent. Hence set the initial condition $z_{0}(x)$ as a constant function, then we will get the solution that $z_{t}(x)$ is independent of $x \in \mathbb{S}^1$, that is, the flow remains the shape of circle. This flow is used in Sect. \ref{sec:long-time existence}.

\section{Preserving the graph property}
\label{sec:preserve}

We devote this section to show that the flow remains a graph. 
Before proving the theorem, we prepare Hamilton's Trick for it. This will play an important role in the proofs of the theorems.

\begin{lemma}[\cite{hamilton1986four,mantegazza2011lecture}]
\label{lem:Hamilton}
Let $M$ be a closed manifold and  
$\rho \colon M \times (0,T) \to \mathbb{R}$ be a $C^1$ function. We define $\rho_{\max}$, $\rho_{\min}$ by $\rho_{\max}(t)=\max_{x \in M} \rho(x,t)$, $\rho_{\min}(t)=\min_{x \in M} \rho(x,t)$. Then $\rho_{\max}$, $\rho_{\min}$ are locally Lipschitz functions and for every differentiable time $t \in (0,T)$, 
\begin{align*}
    \frac{d \rho_{\max}(t)}{d t} = \frac{\partial \rho(x,t)}{\partial t}, \quad 
    \frac{d \rho_{\min}(t)}{d t} = \frac{\partial \rho(x,t)}{\partial t}
\end{align*}
hold at $x \in M$ such that $\rho(\cdot,t)$ attains its maximum or minimum.
\end{lemma}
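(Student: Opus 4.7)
The plan is to prove the lemma in two stages: first establish the local Lipschitz property of $\rho_{\max}$ and $\rho_{\min}$ (which, by Rademacher's theorem, guarantees their differentiability almost everywhere on $(0,T)$), and then compute the derivative at differentiable times via a one-sided comparison with $\partial_t \rho$ at a maximizer.

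For the Lipschitz step, I would fix a compact subinterval $[t_1,t_2] \subset (0,T)$ and observe that since $M \times [t_1,t_2]$ is compact and $\rho$ is $C^1$, the quantity $L = \sup_{M \times [t_1,t_2]} |\partial_t \rho|$ is finite. Then for any $x \in M$ and $s,t \in [t_1,t_2]$, the mean value theorem gives $|\rho(x,t) - \rho(x,s)| \le L|t-s|$. Taking the maximum (respectively minimum) over $x \in M$ on both sides and using that $|\max f - \max g| \le \max|f-g|$ yields $|\rho_{\max}(t) - \rho_{\max}(s)| \le L|t-s|$ and similarly for $\rho_{\min}$, proving the local Lipschitz claim.

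For the derivative formula, let $t \in (0,T)$ be a point where $\rho_{\max}$ is differentiable, and pick $x_t \in M$ with $\rho(x_t,t) = \rho_{\max}(t)$; such a point exists by compactness of $M$. For any $h$ near $0$ we have
\begin{equation*}
\rho(x_t, t+h) - \rho(x_t,t) \le \rho_{\max}(t+h) - \rho_{\max}(t),
\end{equation*}
since $\rho(x_t,t+h) \le \rho_{\max}(t+h)$ and $\rho(x_t,t) = \rho_{\max}(t)$. Dividing by $h > 0$ and letting $h \to 0^+$ yields $\partial_t \rho(x_t,t) \le \tfrac{d}{dt}\rho_{\max}(t)$, while dividing by $h < 0$ reverses the inequality and letting $h \to 0^-$ gives $\partial_t \rho(x_t,t) \ge \tfrac{d}{dt}\rho_{\max}(t)$. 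The two bounds combine to the desired identity. The argument for $\rho_{\min}$ is entirely symmetric after replacing $\rho$ with $-\rho$ or reversing the inequalities.

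The main obstacle is really just a bookkeeping one: being careful that the maximizer $x_t$ depends on $t$ (so one cannot simply differentiate $\rho(x_t,t)$ in $t$), and that the one-sided limits of the difference quotient use the same $x_t$ on both sides, which is exactly what forces the two inequalities to pinch. Once the Lipschitz property hands us an honest classical derivative at $t$, this pinch argument is clean; the subtlety would only arise if one tried to state the formula pointwise in $t$ without the differentiability hypothesis, in which case one must pass to Dini derivatives, but the lemma sidesteps this by restricting to differentiable times.
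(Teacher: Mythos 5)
Your proof is correct and is essentially the standard argument from the cited references (Hamilton; Mantegazza, Lemma 2.1.3): the paper itself gives no proof of this lemma, only the citation. Both the Lipschitz estimate via $|\max f - \max g| \le \max|f-g|$ and the two-sided pinch of the difference quotient at a fixed maximizer are exactly the expected steps, and your care about $x_t$ depending on $t$ is the right point to flag.
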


We recall that the curve is a graph over the parallels of the rotational surface if and only if $u>0$, that is,  $1 \leq v = \frac{1}{u} < \infty$. 
Therefore, we will show that $v$ is uniformly bounded. 
To prove this, we first need the evolution equation for $v$.

\begin{lemma}
Let $v=\frac{1}{u}$. We have
\begin{equation*}
    \left(\partial_t - \Delta\right)v= -\frac{2}{v} \, (\partial_s v)^2  + \left( 2 \mathcal{K}^2 + \overline{K} \right)(v- \frac{1}{v}) -\frac{1}{v}  \, (\mathcal{K}-\kappa v)^2.
\end{equation*}
\end{lemma}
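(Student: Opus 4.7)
The plan is a direct chain-rule computation starting from $v = 1/u$ together with the evolution equation for $u$ already given in Lemma \ref{lem:evolution eq}. I would first record the elementary identities
\begin{align*}
    \partial_t v = -v^2\,\partial_t u, \qquad \partial_s v = -v^2\,\partial_s u, \qquad \partial_s u = -u^2\,\partial_s v,
\end{align*}
and, differentiating once more in $s$,
\begin{align*}
    \Delta v = \partial_s\partial_s v = 2 v^3 (\partial_s u)^2 - v^2\,\Delta u.
\end{align*}
Subtracting, this gives the clean identity
\begin{align*}
    (\partial_t - \Delta) v = -v^2 (\partial_t - \Delta) u - 2 v^3 (\partial_s u)^2.
\end{align*}

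Next I would substitute the $u$-evolution equation from Lemma \ref{lem:evolution eq},
\begin{align*}
    (\partial_t - \Delta) u = -(2\mathcal{K}^2 + \overline{K})(u - u^3) + u(\mathcal{K} u - \kappa)^2,
\end{align*}
and simplify each of the resulting terms using $u = 1/v$. The gradient term becomes $-2 v^3 (\partial_s u)^2 = -2 v^3 \cdot u^4 (\partial_s v)^2 = -\tfrac{2}{v}(\partial_s v)^2$. The curvature term simplifies via $v^2(u - u^3) = v^2 u(1-u^2) = v - \tfrac{1}{v}$, producing $(2\mathcal{K}^2 + \overline{K})(v - \tfrac{1}{v})$. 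Finally, the quadratic term becomes
\begin{align*}
    -v^2\,u\,(\mathcal{K} u - \kappa)^2 = -v^2 \cdot \tfrac{1}{v} \cdot \tfrac{1}{v^2}(\mathcal{K} - \kappa v)^2 = -\tfrac{1}{v}(\mathcal{K} - \kappa v)^2,
\end{align*}
after pulling a factor of $1/v$ out of each factor $(\mathcal{K} u - \kappa)$.

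There is no real obstacle here; the whole argument is a bookkeeping exercise, and the only step that requires a moment of care is tracking the factors of $v$ and $u$ so that the final expression is written entirely in $v$ (and in $\mathcal{K}$, $\kappa$, $\overline{K}$). Combining the three simplified pieces gives exactly
\begin{align*}
    (\partial_t - \Delta) v = -\tfrac{2}{v}(\partial_s v)^2 + (2\mathcal{K}^2 + \overline{K})\bigl(v - \tfrac{1}{v}\bigr) - \tfrac{1}{v}(\mathcal{K} - \kappa v)^2,
\end{align*}
which is the claimed identity.
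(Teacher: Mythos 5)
Your computation is correct and follows exactly the paper's route: the chain-rule identities for $\partial_t v$, $\partial_s v$, $\Delta v$ combined with the evolution equation for $u$ from Lemma \ref{lem:evolution eq}, then rewriting everything in terms of $v$. Nothing to add.
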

\begin{proof}
Using the relations
\begin{align*}
    \partial_t v = - \frac{1}{u^2} \, \partial_t u, \quad \partial_s v = - \frac{1}{u^2} \, \partial_s u, \quad \Delta v = -\frac{1}{u^2} \, \Delta u + \frac{2}{u^3} \left( \partial_s u \right)^2,
\end{align*}
we can get the equation from Lemma $\ref{lem:evolution eq}$.
\end{proof}

We then prove the theorem.

\begin{theorem}
Assume that Setting RS and the initial condition $(\ref{initial})$ hold.
Then for some positive constant $c$, we have $v_t(x) \leq c$, that is, $f_t$ is a graph over the parallels of the rotational surface $\overline{M}$ for $t \in [0,T)$.
\end{theorem}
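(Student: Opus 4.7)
The plan is to apply Hamilton's Trick (Lemma \ref{lem:Hamilton}) to both $v_{\max}(t):=\max_x v(x,t)$ and $z_{\max}(t):=\max_x z_t(x)$ simultaneously, and close the argument by a bootstrap. Define
$$T^* := \sup\bigl\{\,t \in [0,T) : u_s(x) > 0 \text{ for all } x \text{ and } z_s(\mathbb{S}^1) \subseteq J \text{ for all } s \in [0,t]\,\bigr\},$$
which is positive by the initial condition (\ref{initial}). On $[0,T^*)$ both Setting RS inequalities $2\mathcal{K}^2 + \overline{K} < 0$ and $0 < \dot r \le 1/\sqrt{2}$ are available along the curve.

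For the $v$-estimate, at a spatial maximum $x^*$ of $v(\cdot,t)$ one has $\partial_s v = 0$ and $\Delta v \le 0$, so the evolution equation for $v$ just derived yields
$$\frac{d v_{\max}}{dt} \;\le\; (2\mathcal{K}^2 + \overline{K})(z_t(x^*))\bigl(v - v^{-1}\bigr) \;-\; \tfrac{1}{v}(\mathcal{K} - \kappa v)^2.$$
Since $u = \langle N, E_z\rangle \in (0,1]$ we have $v \ge 1$, so $v - v^{-1} \ge 0$; combined with $2\mathcal{K}^2 + \overline{K} < 0$ on $J$, both terms on the right are nonpositive. Hence $v_{\max}$ is nonincreasing on $[0,T^*)$, giving $v \le v_{\max}(0) =: c$.

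For the $z$-estimate, at a spatial maximum of $z$ the identity $\partial_s z = -\langle N,E_\theta\rangle/\sqrt{\dot r^2 + 1}$ from Lemma \ref{lem:laplacian} forces $\langle N, E_\theta\rangle = 0$, hence $u^2 = 1$; since $u > 0$ on $[0,T^*)$ I conclude $u = 1$ at the maximum. Substituting into the evolution equation for $z$ and using $\Delta z \le 0$ gives
$$\frac{d z_{\max}}{dt} \;\le\; -\frac{\dot r(z_{\max})}{r(z_{\max})\bigl(\dot r(z_{\max})^2 + 1\bigr)} \;<\; 0,$$
because $\dot r > 0$ on $J$, so $z_{\max}(t) \le z_{\max}(0) < \widetilde b$ on $[0,T^*)$.

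To close the bootstrap, on $[0,T^*)$ the two estimates give $u \ge 1/c > 0$ and $z_t(\mathbb{S}^1) \subseteq (-\infty, z_{\max}(0)]$, both strict relative to the boundary values $0$ and $\widetilde b$. If $T^* < T$ then these inequalities persist at $t = T^*$ by continuity of $F$, hence by smoothness slightly beyond $T^*$, contradicting the definition of $T^*$. Therefore $T^* = T$ and $v \le c$ on $[0,T)$. The principal subtlety — and the reason the two Hamilton's Trick estimates have to be set up in tandem — is the circularity: the $v_{\max}$ bound requires $z \in J$ while the decrease of $z_{\max}$ requires $u > 0$, and running both on the same maximal interval is what resolves this.
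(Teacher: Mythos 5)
Your proof is correct and follows essentially the same route as the paper: monotone decrease of $z_{\max}$ (via $u^2=1$ at the spatial maximum of $z$ and $\dot r>0$) keeps the curve in $J$, and then the maximum principle applied to the evolution equation for $v$, using $2\mathcal{K}^2+\overline K<0$ and $v\ge 1$, bounds $v$ by its initial maximum. One remark: the circularity you flag as the principal subtlety is not actually present, because the evolution equation for $z$ involves only $u^2$, and $\partial_s z=0$ at the spatial maximum already forces $u^2=1$ regardless of the sign of $u$; hence the $z_{\max}$ estimate holds unconditionally, and the paper simply proves it first and then runs the $v$ estimate --- your $T^*$ bootstrap is a legitimate (and slightly more careful) way to handle the well-definedness of $v=1/u$, but the tandem structure is not forced.
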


\begin{proof}
Let $z_{\max}(t)=\max_{x \in \mathbb{S}^1} z(x,t)$. 
When $(x,t)$ satisfies $z_{\max}(t)=z(x,t)$, we have $u(x,t)=1$. 
From the evolution equation for $z$ (Lemma \ref{lem:evolution eq}), Lemma \ref{lem:Hamilton} and $\dot{r}(z)>0$, we have the following inequality:
\begin{align*}
    \frac{d z_{\max}}{d t}(t) \leq  - \frac{\dot{r}}{r \, (\dot{r}^2 +1)}(z(x,t)) < 0.
\end{align*}
Therefore we obtain $z_{\max}(t) \leq z_{\max}(0)$ and namely $z_t(\mathbb{S}^1) \subseteq J$.

Then let $t_0 < T$ and $(x,t)$ be a point that $v$ attains its maximum on $\mathbb{S}^1 \times [0,t_0]$. 
We can assume $t > 0$ and from the evolution equation for $v$ we get
\begin{align*}
    0 &\leq \left(\partial_t - \Delta\right)v(x,t) \\
    &\leq \left( 2 \, \mathcal{K}^2 + \overline{K} \right)(v - \frac{1}{v})(x,t).
\end{align*}
Here we combine Setting RS with $v \geq 1$, then  $v(x,t)=1$ follows.
Thus we take the case of $t=0$ into consideration, 
\begin{align*}
    v(x,t) \leq \max_{x \in \mathbb{S}^1}v_0(x)
\end{align*}
holds. 
Since we choose any $t_0$, the theorem follows.
\end{proof}
We remark that from this theorem the flow $\{ f_t \}_{t \in [0,T)}$ remains an embedding. This fact is used implicitly in the later sections.

\section{Comparison principle}
\label{sec:comparison principle}
The comparison principle for mean curvature flow in Euclidean space is well-known.
In this section we are going to show the counterpart in the sense of the CSF on rotational surfaces under Setting RS and the initial condition $(\ref{initial})$.

\begin{theorem}
\label{thm:comparison}
Let $\overline{M}$ be a rotational surface that satisfies Setting RS and $\{f_t\}_{t\in[0,T)}$, $\{\widetilde{f}_t\}_{t\in[0,T)}$ be CSFs on $\overline{M}$ that satisfy the initial condition $(\ref{initial})$, and we set $z_{\max}(t) = \max_{x \in \mathbb{S}^1} z(x,t)$,  $\widetilde{z}_{\min}(t)=\min_{x \in \mathbb{S}^1}\widetilde{z}(x,t)$. 
If we have $z_{\max}(0) \leq \widetilde{z}_{\min}(0)$, then $z_{\max}(t) \leq \widetilde{z}_{\min}(t)$ follows.
\end{theorem}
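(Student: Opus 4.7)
The strategy is to track the scalar quantity $\delta(t) = \widetilde{z}_{\min}(t) - z_{\max}(t)$ and show via Hamilton's trick together with a Gronwall-type ODE comparison that $\delta$ cannot cross from non-negative to negative. The key observation is that the boundary behaviour of both two-dimensional flows at the relevant extrema is governed by the same one-dimensional rate function, namely the ``round circle'' flow (\ref{eq:standard flow}) from Section \ref{sec:CSF}.

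\emph{Step 1: identification $u = 1$ at the extrema.} At a point $x \in \mathbb{S}^1$ where $z(\cdot,t)$ attains its maximum, $\partial_s z(x,t) = 0$, which by the formula for $\partial_s z$ in Lemma \ref{lem:laplacian} forces $\langle N, E_\theta \rangle = 0$. Combined with the graph-preservation theorem of Section \ref{sec:preserve} (which ensures $u > 0$) this gives $u(x,t) = 1$. Substituting $u = 1$ into the evolution equation for $z$ in Lemma \ref{lem:evolution eq} kills the first term, leaving
\begin{equation*}
\left(\partial_t - \Delta\right)z \big|_{(x,t)} = -\frac{\dot r(z)}{r(z)(\dot r(z)^2+1)} =: \phi\bigl(z_{\max}(t)\bigr).
\end{equation*}
Since $\Delta z(x,t) \leq 0$ at the maximum, Hamilton's trick (Lemma \ref{lem:Hamilton}) yields
\begin{equation*}
\frac{d z_{\max}}{dt}(t) \leq \phi\bigl(z_{\max}(t)\bigr)
\end{equation*}
at a.e.\ differentiable time. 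The parallel computation at a minimum of $\widetilde{z}$, where $\widetilde{u} = 1$ and $\Delta \widetilde{z} \geq 0$, gives $\frac{d\widetilde{z}_{\min}}{dt}(t) \geq \phi(\widetilde{z}_{\min}(t))$.

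\emph{Step 2: ODE comparison.} Subtracting the two inequalities gives $\delta'(t) \geq \phi(\widetilde{z}_{\min}(t)) - \phi(z_{\max}(t))$ for a.e.\ $t$. From Section \ref{sec:preserve} we already have $z_{\max}(t) \leq z_{\max}(0)$ and by the same argument $\widetilde{z}_{\min}(t) \leq \widetilde{z}_{\max}(0)$, so on any compact time interval $[0, T_0] \subset [0,T)$ both $z_{\max}(t)$ and $\widetilde{z}_{\min}(t)$ remain in a compact subinterval $K \subset J$ on which $\phi$ is Lipschitz with some constant $L$. Hence $\delta'(t) \geq -L|\delta(t)|$. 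A standard contradiction finishes: if $\delta(t_1) < 0$, let $t_0 = \sup\{t \leq t_1 : \delta(t) \geq 0\}$, so $\delta(t_0) = 0$ and $\delta \leq 0$ on $[t_0, t_1]$; there $\delta' \geq L\delta$, hence $(e^{-Lt}\delta(t))' \geq 0$, which forces $\delta(t_1) \geq e^{L(t_1-t_0)}\delta(t_0) = 0$, contradiction.

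The main conceptual step is the identification $u = 1$ at the extrema of $z$ and $\widetilde{z}$, which genuinely relies on the graph-preservation theorem of the previous section; once this is granted, the whole PDE comparison collapses to a one-dimensional ODE comparison against the same autonomous vector field $\phi$, and no further geometric input from Setting RS (beyond what was needed to preserve the graph condition) is required.
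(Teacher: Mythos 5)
Your proof is correct, and while the first half coincides with the paper's argument, the concluding step is genuinely different. Like the paper, you reduce everything to the extrema of $z$ and $\widetilde z$, where graph preservation forces $u=1$; the paper reaches the same two differential inequalities by evaluating the graph-curvature formula (Lemma \ref{lem:graph curvature}) at a point with $\dot z=0$ and discarding the $\ddot z$-terms by their sign, whereas you substitute $u=1$ into the evolution equation of Lemma \ref{lem:evolution eq} and discard $\Delta z$ by its sign at the extremum --- the same computation in two guises. The divergence is in how one closes the argument from $\frac{d\rho}{dt} \geq \phi(\widetilde z_{\min}) - \phi(z_{\max})$ with $\phi = -G$. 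The paper invokes the strict monotonicity of $G$, which is where the inequality $(\ref{eq:G increasing})$, and hence Setting RS, enters a second time: monotonicity gives $\frac{d\rho}{dt}>0$ wherever $\rho<0$, and a contradiction at an interior minimum of $\rho$. You instead use only that $\phi$ is locally Lipschitz on the compact range swept out over $[0,t_1]$ (which holds for any smooth positive $r$), turning the problem into the textbook Gronwall comparison $\rho' \geq L\rho$ on $\{\rho \leq 0\}$. Your route is more robust --- it shows the comparison principle needs Setting RS only through graph preservation, not through the sign of $\dot G$ --- at the mild cost of introducing a time-interval-dependent Lipschitz constant; the paper's monotonicity argument is constant-free but tied to the specific structure of $G$ under Setting RS. One small point worth making explicit in your write-up: the compactness of the range of $z_{\max}$ and $\widetilde z_{\min}$ on $[0,T_0]$ follows from continuity of $z,\widetilde z$ on $\mathbb{S}^1\times[0,T_0]$ (for the lower bound) together with $z_{\max}(t)\leq z_{\max}(0)<\widetilde b$ (to stay inside $J$); and since $z_{\max}$, $\widetilde z_{\min}$ are only locally Lipschitz, the differential inequality for $\rho$ holds almost everywhere and must be integrated using absolute continuity, which your phrasing already accommodates.
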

\begin{proof}
We set $\rho(t)=\widetilde{z}_{\min}(t) - z_{\max}(t)$ and obtain the equation below by $(\ref{eq:timediff_z})$: 
\begin{align*}
    \frac{d \rho}{d t}= -\frac{\widetilde{\kappa}}{\sqrt{\dot{r}(\widetilde{z}_{\min})^2 +1}} + \frac{\kappa}{\sqrt{\dot{r}(z_{\max})^2 +1}}.
\end{align*}
We choose any $t_1 > 0$ and let $x_1$, $\widetilde{x}_1$ be points that satisfy
$z_{\max}(t_1) = z(x_1,t_1)$, $\widetilde{z}_{\min}(t_1)=\widetilde{z}(\widetilde{x}_1,t_1)$. 
Then $f_t$, $\widetilde{f}_t$ are both graphs on the neighborhoods of $x_1$ and $\widetilde{x}_1$. Thus by Lemma \ref{lem:graph curvature} we can write the above equation as
\begin{align*}
    \frac{d \rho}{d t}= -\frac{\dot{r}}{r \left(\dot{r}^2 +1 \right)}(\widetilde{z}_{\min}(t_1)) + \frac{\ddot{\widetilde{z}}(\widetilde{x}_1,t_1)}{r^2(\widetilde{z}_{\min}(t_1))} + \frac{\dot{r}}{r \left(\dot{r}^2 +1 \right)}(z_{\max}(t_1)) - \frac{\ddot{z}(x_1,t_1)}{r^2(z_{\max}(t_1))},
\end{align*}
and furthermore from $\ddot{\widetilde{z}}(\widetilde{x}_1,t_1) \geq 0$, $\ddot{z}(x_1,t_1) \leq 0$, we have
\begin{align*}
    \frac{d \rho}{d t} \geq -\frac{\dot{r}}{r \left(\dot{r}^2 +1 \right)}(\widetilde{z}_{\min}(t_1)) + \frac{\dot{r}}{r \left(\dot{r}^2 +1 \right)}(z_{\max}(t_1)).
\end{align*}
Now we define $G(z)=\frac{\dot{r}}{r \left(\dot{r}^2 +1 \right)}(z)$ and from Setting RS, we have
\begin{align}
\label{eq:G increasing}
    \dot{G}=\frac{1}{r^2 (\dot{r}^2 + 1)^2} \left\{ r \ddot{r} \left(1-\dot{r}^2 \right)- \dot{r}^2 \left( \dot{r}^2 +1 \right) \right\} > 0,
\end{align}
and thus $G$ is strictly increasing. Then we assume $\rho(t_1)<0$, we obtain
\begin{align*}
    \frac{d \rho}{d t}(t_1) > 0.
\end{align*}
Meanwhile at $t_2 \in (0,t_1]$ that meets 
$\min_{t \in [0,t_1]} \rho(t)=\rho(t_2) < 0$, we have
\begin{align*}
    \frac{d \rho}{d t}(t_2) \leq 0.
\end{align*}
However this is a contradiction and therefore $\rho(t) \geq 0$ for any $t \in [0,T)$.
\end{proof}

\section{Long-time existence of the flow}
\label{sec:long-time existence}

In this section we assume Setting RS and the initial condition $(\ref{initial})$ for the ambient space $\overline{M}$ and the flow $\{f_t\}_{t \in [0,T)}$.
First we compute the estimate as in \cite{ecker1991interior,cabezas2009volume,cabezas2012volume}. 
Then we will show the long-time existence.

\begin{lemma}
\label{lem:curvature ineq}

Let
\begin{align*}
    \varphi(v) = \frac{v^2}{1-k v^2}, \quad k=\frac{1}{2 \sup v^2},
\end{align*}
and define $\mathfrak{g}=\varphi(v) \, \kappa^2$. Then we have
\begin{align}
\label{eq:curvature ineq}
    \left(\partial_t - \Delta\right) \mathfrak{g} \leq
    -2k \mathfrak{g}^2 +  4 \mathcal{K} \frac{\sqrt{\varphi}}{v^3} \mathfrak{g}^{\frac{3}{2}} + 2 \overline{K} \mathfrak{g}  -\frac{1}{\varphi} \left( \partial_s \mathfrak{g} \cdot \partial_s \varphi  \right).
\end{align}
\end{lemma}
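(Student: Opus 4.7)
The plan is to apply the heat operator $L := \partial_t - \Delta$ (with $\Delta = \partial_s^2$ by \eqref{eq:nabla0}) to $\mathfrak{g} = \varphi(v)\kappa^2$ via the product rule $L(fg) = (Lf)g + f(Lg) - 2\,\partial_s f\cdot\partial_s g$ and the chain rule $L\varphi(v) = \varphi'(v)\,Lv - \varphi''(v)\,(\partial_s v)^2$, then to substitute the evolution equations of Lemma \ref{lem:evolution eq} and regroup the output into zeroth-order (non-gradient) and gradient pieces, handled separately.

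For the zeroth-order pieces I would match up four groups. The $\kappa^4$ terms $2\varphi\kappa^4$ (from $L\kappa^2$) and $-v\varphi'\kappa^4$ (from expanding $-\frac{1}{v}(\mathcal{K}-\kappa v)^2$ in $Lv$) combine via the identity $2\varphi - v\varphi' = -2k\varphi^2$ (a one-line check using $\varphi = v^2/(1-kv^2)$) to give $-2k\mathfrak{g}^2$; the piece $2\overline{K}\varphi\kappa^2 = 2\overline{K}\mathfrak{g}$ appears directly; and the cross term $2\mathcal{K}\kappa^3\varphi'$ is bounded using $\kappa^3 \leq |\kappa|^3$, together with the identities $\varphi' = 2\varphi^2/v^3$ and $\mathfrak{g}^{3/2} = \varphi^{3/2}|\kappa|^3$, by $4\mathcal{K}\sqrt{\varphi}\,\mathfrak{g}^{3/2}/v^3$. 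The remaining two zeroth-order pieces, $\kappa^2\varphi'(2\mathcal{K}^2+\overline{K})(v-1/v)$ and $-\kappa^2\varphi'\mathcal{K}^2/v$, are non-positive: Setting RS forces $2\mathcal{K}^2+\overline{K}<0$, and the graph-preserving theorem of Sect.~\ref{sec:preserve} yields $v = 1/u \geq 1$, so $v - 1/v \geq 0$. These terms are then discarded.

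The key step is the gradient part. Using $\partial_s\mathfrak{g} = \varphi\,\partial_s\kappa^2 + \kappa^2\,\partial_s\varphi$ to rewrite $-\frac{1}{\varphi}\partial_s\mathfrak{g}\cdot\partial_s\varphi = -\partial_s\kappa^2\cdot\partial_s\varphi - \kappa^2(\partial_s\varphi)^2/\varphi$, the remaining inequality reduces to the non-positivity of the quadratic form in $(a,b) := (\partial_s v, \partial_s\kappa)$ given by
\[
Q(a,b) = -2\varphi\,b^2 - 2\kappa\varphi'\,ab + \kappa^2\!\left[\frac{(\varphi')^2}{\varphi} - \varphi'' - \frac{2\varphi'}{v}\right]\!a^2.
\]
Since the $b^2$-coefficient is negative, $Q \leq 0$ on all of $\mathbb{R}^2$ is equivalent to the discriminant bound $3(\varphi')^2 \leq 2\varphi\varphi'' + 4\varphi\varphi'/v$; substituting $\varphi' = 2v/(1-kv^2)^2$ and $\varphi'' = (2+6kv^2)/(1-kv^2)^3$, both sides share the denominator $(1-kv^2)^4$ and the difference reduces to $-4kv^4/(1-kv^2)^4 \leq 0$. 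The main obstacle is really just bookkeeping: cleanly isolating the four zeroth-order groups, and identifying which combination of the four surviving gradient terms reconstructs $-\frac{1}{\varphi}\partial_s\mathfrak{g}\cdot\partial_s\varphi$. Once the three identities $2\varphi - v\varphi' = -2k\varphi^2$, $\varphi' = 2\varphi^2/v^3$, and $3(\varphi')^2 - 2\varphi\varphi'' - 4\varphi\varphi'/v = -4kv^4/(1-kv^2)^4$ are recorded at the outset, the estimate \eqref{eq:curvature ineq} follows by collecting terms.
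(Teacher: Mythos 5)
Your proposal is correct and follows essentially the same route as the paper: the same product/chain-rule decomposition, the same grouping of zeroth-order terms (your identity $2\varphi - v\varphi' = -2k\varphi^2$ is the paper's $-\tfrac{2}{v^2}+\tfrac{2}{\varphi}=-2k$ in disguise), and the same discarding of the two nonpositive terms using Setting RS and $v\ge 1$. The only cosmetic difference is that you absorb the gradient cross term by testing negative semidefiniteness of a quadratic form in $(\partial_s v,\partial_s\kappa)$, whereas the paper uses Young's inequality with weight $\varphi$; both reduce to the identical condition $3(\varphi')^2\le 2\varphi\varphi''+4\varphi\varphi'/v$.
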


\begin{proof}
First,
\begin{align*}
    \left(\partial_t - \Delta\right)\mathfrak{g} 
    &= \kappa^2 \left(\partial_t - \Delta\right)\varphi + \varphi \left(\partial_t - \Delta\right)\kappa^2 - 2 \left( \partial_s \varphi \cdot \partial_s \kappa^2 \right) \\
    &=\kappa^2 \varphi^{\prime} \left(\partial_t - \Delta\right)v - \kappa^2 \varphi^{\prime\prime}\left( \partial_s v\right)^2 + \varphi \left(\partial_t - \Delta\right)\kappa^2 - 2 \left( \partial_s \varphi \cdot \partial_s \kappa^2 \right)
\end{align*}
holds and we have 
\begin{align*}
    -\left( \partial_s \varphi \cdot \partial_s \kappa^2 \right) 
    =-\frac{1}{\varphi} \left( \partial_s \mathfrak{g} \cdot \partial_s \varphi  \right) + \frac{1}{\varphi} (\partial_s \varphi)^2 \kappa^2.
\end{align*}
Using Young's inequality $(a b \leq \frac{a^2}{2 \varepsilon} + \frac{\varepsilon b^2}{2})$ with $\varepsilon = \varphi$, we get
\begin{align*}
    -\left( \partial_s \varphi \cdot \partial_s \kappa^2 \right)=\left(-\partial_s \varphi \cdot \kappa\right) \cdot 2\partial_s \kappa \leq \frac{1}{2\varphi}\left( \partial_s \varphi \right)^2 \kappa^2 + 2 \varphi \left( \partial_s \kappa \right)^2.
\end{align*}
From the above, we obtain the following inequality
\begin{align*}
    - 2 \left( \partial_s \varphi \cdot \partial_s \kappa^2 \right) 
    &\leq
    -\frac{1}{\varphi} \left( \partial_s \mathfrak{g} \cdot \partial_s \varphi  \right) + 2\varphi \left( \partial_s \kappa \right)^2 + \frac{3}{2\varphi} (\partial_s \varphi)^2 \kappa^2,
\end{align*}
and we have
\begin{align*}
    \left(\partial_t - \Delta\right)\mathfrak{g} 
    &\leq 
    \kappa^2 \varphi^{\prime} \left(\partial_t - \Delta\right)v - \kappa^2 \varphi^{\prime\prime}\left( \partial_s v\right)^2 + \varphi \left(\partial_t - \Delta\right)\kappa^2 \\
    &\quad -\frac{1}{\varphi} \left( \partial_s \mathfrak{g} \cdot \partial_s \varphi  \right) + 2\varphi \left( \partial_s \kappa \right)^2 + \frac{3}{2\varphi} (\partial_s \varphi)^2 \kappa^2.
\end{align*}
Using $\partial_s v = \frac{\partial_s \varphi}{\varphi^{\prime}}$ and the evolution equations for $v$ and $\kappa^2$, we can write
\begin{align}
\label{eq:prepare ineq}
    \left(\partial_t - \Delta\right)\mathfrak{g} 
    &\leq \mathfrak{S} - \kappa^2 \left( \frac{2}{\varphi^{\prime}v} + \frac{\varphi^{\prime\prime}}{{\varphi^{\prime}}^2} - \frac{3}{2\varphi} \right) \left( \partial_s \varphi \right)^2 -\frac{1}{\varphi} \left( \partial_s \mathfrak{g} \cdot \partial_s \varphi  \right),
\end{align}
where $\mathfrak{S}$ is defined by
\begin{align*}
    \mathfrak{S}= \left( 2 \mathcal{K}^2 + \overline{K} \right) \left( v - \frac{1}{v} \right) \kappa^2 \varphi^{\prime}  - \frac{1}{v} \left(\mathcal{K}-\kappa v\right)^2 \kappa^2 \varphi^{\prime} + 2 \varphi \kappa^2\left(\kappa^2 + \overline{K} \right).
\end{align*}
Now from $\varphi^{\prime}=2 \frac{\varphi^2}{v^3}$, we have
\begin{equation}
\label{eq:S}
    \begin{split}
    \mathfrak{S} 
    &=
    2\frac{\kappa^2 \varphi^2}{v^3} \left( 2 \mathcal{K}^2 + \overline{K} \right) \left( v - \frac{1}{v} \right) -2\frac{\kappa^2 \varphi^2}{v^4} \left( \mathcal{K}^2 -2 \mathcal{K} \kappa v + \kappa^2 v^2 \right) + 2 \kappa^4 \varphi + 2 \mathfrak{g} \overline{K}
    \\ 
    &\leq
    4 \mathcal{K} \frac{\kappa^3 \varphi^2}{v^3} -2\frac{\kappa^4 \varphi^2}{v^2} + 2 \kappa^4 \varphi + 2 \mathfrak{g} \overline{K}
    \\
    &=
    \left( - \frac{2}{v^2} + \frac{2}{\varphi} \right) \mathfrak{g}^2 + 4 \mathcal{K} \frac{\sqrt{\varphi}}{v^3} \mathfrak{g}^{\frac{3}{2}} + 2 \overline{K} \mathfrak{g}
    \\
    &= -2 k \mathfrak{g}^2 + 4 \mathcal{K} \frac{\sqrt{\varphi}}{v^3} \mathfrak{g}^{\frac{3}{2}} + 2 \overline{K} \mathfrak{g}.
    \end{split}
\end{equation}
(\ref{eq:prepare ineq}), (\ref{eq:S}) and
\begin{align*}
     \left( \frac{2}{\varphi^{\prime}v} + \frac{\varphi^{\prime\prime}}{{\varphi^{\prime}}^2} - \frac{3}{2\varphi} \right) = \frac{\varphi - v^2}{2 \varphi^2} > 0
\end{align*}
prove the inequality (\ref{eq:curvature ineq}).

\end{proof}

Before we prove the main theorem, we give three lemmas.

\begin{lemma}
\label{lem:z bounded}
If $T<\infty$, then $z$ is bounded below.
\end{lemma}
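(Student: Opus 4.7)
The plan is to show that $z_{\min}(t) := \min_{x \in \mathbb{S}^1} z(x,t)$ is uniformly bounded below on $[0,T)$ by a finite constant whenever $T < \infty$. The strategy is to apply Hamilton's trick (Lemma \ref{lem:Hamilton}) to $z_{\min}$, exploiting that the $(1-u^2)$-factor in the evolution equation for $z$ vanishes at a spatial minimum.

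First I would observe that at any $(x_\ast,t)$ where $z(\cdot,t)$ attains its minimum, the relation $\partial_s z = -\langle N, E_\theta\rangle / \sqrt{\dot{r}^2+1}$ from Lemma \ref{lem:laplacian} forces $\langle N, E_\theta\rangle(x_\ast,t) = 0$, and hence $u(x_\ast,t)^2 = 1$. Together with $\Delta z(x_\ast,t) \geq 0$ and the evolution equation $(\partial_t - \Delta)z = \frac{\dot{r}\ddot{r}}{(\dot{r}^2+1)^2}(1-u^2) - \frac{\dot{r}}{r(\dot{r}^2+1)} u^2$ from Lemma \ref{lem:evolution eq}, Hamilton's trick yields
\[
  \frac{d z_{\min}}{d t}(t) \;\geq\; -\frac{\dot{r}(z_{\min}(t))}{r(z_{\min}(t))\bigl(\dot{r}(z_{\min}(t))^2 + 1\bigr)} \;=\; -G(z_{\min}(t)),
\]
where $G(z) = \dot{r}/(r(\dot{r}^2+1))$ is the positive function introduced in the proof of Theorem \ref{thm:comparison}.

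Next I would use that $G$ is strictly increasing on $J$ by the inequality (\ref{eq:G increasing}) under Setting RS, combined with the a priori bound $z_{\min}(t) \leq z_{\max}(t) \leq z_{\max}(0) < \widetilde{b}$ established in the proof of the graph-preservation theorem in Section \ref{sec:preserve}. These together give $G(z_{\min}(t)) \leq G(z_{\max}(0))$, so integrating the differential inequality over $[0,t]$ yields
\[
  z_{\min}(t) \;\geq\; z_{\min}(0) - G(z_{\max}(0))\,t \;\geq\; z_{\min}(0) - G(z_{\max}(0))\,T .
\]
Since $z_{\max}(0) \in J$ and $G$ is continuous on $J$, the constant $G(z_{\max}(0))$ is finite, so the right-hand side is a finite lower bound whenever $T < \infty$.

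I do not anticipate a serious obstacle: the main conceptual point is the observation that $u^2 = 1$ at a spatial extremum of $z$, which kills the potentially problematic $(1-u^2)$-term in the evolution of $z$ and leaves only an ODE-type quantity $-G(z_{\min})$ that is controlled by monotonicity of $G$. Alternatively, one could derive the same bound by comparing $\{f_t\}$ from below with the standard-circle flow (\ref{eq:standard flow}) issuing from the parallel at height $z_{\min}(0)$ via Theorem \ref{thm:comparison}; along that reference flow $|\dot{z}| = G(z)$ is bounded by $G(z_{\min}(0))$, yielding the same linear-in-$t$ lower bound.
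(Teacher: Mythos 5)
Your proposal is correct and follows essentially the same route as the paper: both derive the differential inequality $\frac{d z_{\min}}{dt} \geq -G(z_{\min}(t))$ from the evolution equation for $z$ via Hamilton's trick, then use the monotonicity of $G$ together with the bound $z_{\min}(t) \leq z_{\max}(t) \leq z_{\max}(0)$ to integrate and obtain a lower bound linear in $T$. Your extra observation that $u^2 = 1$ at the spatial minimum (killing the $(1-u^2)$-term) is a fine way to justify the inequality that the paper states without comment.
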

\begin{proof}
We set $z_{\min}(t)=\min_{x \in \mathbb{S}^1} z(x,t)$. By the evolution equation for $z$, we obtain
\begin{align*}
    \frac{d z_{\min}}{d t} \geq - G(z_{\min}(t)),
\end{align*}
where $G$ is the function defined in the proof of Theorem $\ref{thm:comparison}$.
Since $G$ is strictly increasing (from $(\ref{eq:G increasing})$) and $z_{max}$ is strictly decreasing, we get
\begin{align*}
    -z_{\min}(t) 
    &\leq - z_{\min}(0) + \int_{0}^{t} G(z_{\min}(t)) d t \\
    &\leq - z_{\min}(0) + \int_{0}^{t} G(z_{\max}(t)) d t \\
    &\leq - z_{\min}(0) + \int_{0}^{t} G(z_{\max}(0)) d t \\
    &\leq - z_{\min}(0) + G(z_{\max}(0)) T, \\
\end{align*}
and thus $z$ is bounded below.
\end{proof}

From this lemma, the flow $\{ f_t \}_{t \in [0,T)}$ is included in a bounded region of $\overline{M}$ if $T < \infty$.

\begin{lemma}
\label{lem:time bounded}
If $T<\infty$, then $\max_{x \in \mathbb{S}^1}\kappa^2(x,t) \to \infty$  $(t \to T)$.
\end{lemma}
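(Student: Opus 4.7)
The plan is to argue by contradiction: assume that $\sup_{t \in [0,T)} \max_{x \in \mathbb{S}^1} \kappa^2(x,t) \leq C$ for some finite constant $C$, and show that under this assumption the flow extends smoothly past $T$, contradicting the maximality of $T$.

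First, I would establish that the flow lies in a region of $\overline{M}$ of uniformly controlled geometry. Because $T<\infty$, Lemma \ref{lem:z bounded} gives a uniform lower bound on $z$; combined with the monotonicity $z_{\max}(t) \leq z_{\max}(0)$ already used in Sect.~\ref{sec:preserve}, this confines $z_t(\mathbb{S}^1)$ to a fixed compact subinterval $[z_*, z^*] \subset J$ for all $t \in [0,T)$. Hence $r, \dot{r}, \ddot{r}, \mathcal{K}, \overline{K}$ and all their $z$-derivatives are uniformly bounded along the flow. Since the theorem of Sect.~\ref{sec:preserve} also gives $v = 1/u \leq c$, each $f_t$ may be represented as a graph $\theta \mapsto (\theta, z(\theta,t))$ over the parallels, with $u$ bounded below away from zero uniformly in $t$.

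Next I would convert CSF into a quasilinear parabolic scalar equation for $z(\theta,t)$, using $\partial_t z = -\kappa u/\sqrt{\dot{r}^2+1}$ together with the curvature formula of Lemma \ref{lem:graph curvature}. The resulting equation has smooth coefficients that are uniformly elliptic on the range of $z$ and $\partial_\theta z$ controlled above, and the assumed bound $|\kappa| \leq \sqrt{C}$ translates, via Lemma \ref{lem:graph curvature}, into an a priori $C^2$ bound on $z(\cdot,t)$. From this I would run a standard parabolic bootstrap, either by iterating an Ecker--Huisken-type maximum-principle argument on the evolution equations for $(\partial_s^k \kappa)^2$ (whose structure is determined inductively from Lemma \ref{lem:evolution eq}), or by invoking the Krylov--Safonov and parabolic Schauder theory for the scalar PDE, to obtain uniform $C^k$ bounds on $z(\cdot,t)$ on $[0,T)$ for every $k \in \mathbb{N}$.

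Finally, since $|\partial_t F| = |\kappa|$ is uniformly bounded, the family $\{f_t\}$ is equicontinuous in $t$, and the higher-order bounds upgrade this to smooth convergence $f_t \to f_T$ as $t \to T$, where $f_T$ is a smooth embedded graph in $\overline{M}$ satisfying the initial condition $(\ref{initial})$. Applying short-time existence to $f_T$ yields a smooth extension of the flow to $[0,T+\delta)$, the desired contradiction. The main obstacle is the bootstrap step: the good term $-2(\partial_s \kappa)^2$ in the evolution equation for $\kappa^2$ (Lemma \ref{lem:evolution eq}) provides an $L^2$-type control, but converting the $\kappa^2$ bound into pointwise bounds on all $\partial_s^k \kappa$ requires carefully tracking the contributions of $\overline{K}(z)$ and $\mathcal{K}(z)$, whose uniform bounds on $[z_*, z^*]$ from the first step are precisely what close the argument.
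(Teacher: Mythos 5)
Your proposal is essentially the paper's own proof: the paper simply cites Lemma \ref{lem:z bounded} together with ``the arguments of Theorem 7.1 of Huisken (1986)'', which is exactly the contradiction argument you spell out (bounded curvature plus confinement to a compact region $\Rightarrow$ uniform higher-order estimates $\Rightarrow$ smooth convergence as $t \to T$ $\Rightarrow$ extension by short-time existence). The only caveat, which applies equally to the paper's citation, is that this argument literally yields $\sup_{[0,T)} \max \kappa^2 = \infty$; upgrading to the stated limit $\max\kappa^2 \to \infty$ requires the standard doubling-time remark based on $\tfrac{d}{dt}\max\kappa^2 \leq 2\max\kappa^2(\max\kappa^2 + \overline{K})$.
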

\begin{proof}
This lemma follows Lemma \ref{lem:z bounded} and the arguments of Theorem 7.1 of \cite{huisken1986contracting}.
\end{proof}

\begin{lemma}
\label{lem:K bounded}
$\mathcal{K}(z) \to 0$  $(z \to - \infty)$.
\end{lemma}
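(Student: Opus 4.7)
The plan is to exploit the formula $\mathcal{K}(z) = \dot{r}(z)/(r(z)\sqrt{\dot{r}(z)^2+1})$ together with the two pieces of Setting RS: the bound $0 < \dot{r} \leq 1/\sqrt 2$, and the strict convexity $r\ddot r > 2\dot r^2(\dot r^2+1)$. Since $\sqrt{\dot r^2+1}$ is pinched in $[1,\sqrt{3/2}\,]$, it suffices to prove $\dot r(z)/r(z) \to 0$ as $z \to -\infty$.

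First I would extract monotonicity. Setting RS implies $\ddot r > 0$, so $\dot r$ is strictly increasing. Because $\dot r$ is positive and bounded above on $J$, the limit $L := \lim_{z\to -\infty}\dot r(z) \in [0,1/\sqrt 2]$ exists. If $L$ were positive, then for $z<\widetilde b$ one would have $r(z) = r(\widetilde b) - \int_z^{\widetilde b}\dot r(\zeta)\,d\zeta \leq r(\widetilde b) - L(\widetilde b - z) \to -\infty$, contradicting $r>0$. Hence $\dot r(z) \to 0$. Similarly, $r$ is positive and (being $\dot r>0$) strictly increasing, so $r(z) \to r_{-\infty} \geq 0$ as $z \to -\infty$.

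I would then split on the limit of $r$. If $r_{-\infty}>0$, the bound $\dot r(z)/r(z) \leq \dot r(z)/r_{-\infty}$ already gives the conclusion. If $r_{-\infty}=0$, I would use the strict convexity inequality $r\ddot r > 2\dot r^2(\dot r^2+1) \geq 2\dot r^2$, rearranged as $\ddot r/\dot r > 2\dot r/r$, i.e.\ $(\log\dot r)' > 2(\log r)'$. Consequently $\dot r/r^2$ is strictly increasing on $J$, hence bounded above by $C := \dot r(\widetilde b)/r(\widetilde b)^2$ for all $z\leq\widetilde b$. Writing $\dot r/r = r\cdot(\dot r/r^2) \leq Cr \to 0$ closes this case.

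The argument is short; the one step I would be careful about is the case split on $r_{-\infty}$, since the single estimate $\dot r/r^2 \leq C$ alone gives only boundedness of $\dot r/r$, not decay. Combining the two regimes via monotonicity of $\dot r$ (for $r_{-\infty}>0$) and monotonicity of $\dot r/r^2$ (for $r_{-\infty}=0$) is what closes the gap, and it is the only conceptually nontrivial point.
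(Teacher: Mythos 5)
Your argument is correct, but it takes a different route from the paper's. The paper works directly with $\eta=\dot r/r$ and observes that $r\ddot r>2\dot r^2$ gives the Riccati-type inequality $\dot\eta=(r\ddot r-\dot r^2)/r^2>\eta^2$, hence $\frac{d}{dz}(1/\eta)<-1$; integrating from $z$ to a fixed $z_0$ forces $1/\eta(z)\to+\infty$ as $z\to-\infty$, with no case analysis and no need to identify the limits of $r$ or $\dot r$ separately (it even yields the quantitative rate $\eta(z)\lesssim |z|^{-1}$). Your proof rests on the same core inequality — indeed $(\dot r/r^2)'>0$ is algebraically equivalent to $\dot\eta>\eta^2$ — but you exploit it via monotonicity of $\dot r/r^2$ combined with the auxiliary limits $\dot r\to 0$ and $r\to r_{-\infty}$, and a dichotomy on whether $r_{-\infty}$ vanishes. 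This is longer but entirely elementary, and you correctly flag the one delicate point: boundedness of $\dot r/r^2$ alone only bounds $\dot r/r$, so the two regimes must be closed by different means. Both proofs are valid; the paper's single integration is the slicker packaging of the same estimate. (One cosmetic remark: since $J=(-\infty,\widetilde b)$ is open, take the reference point $z_0$ strictly inside $J$ rather than at $\widetilde b$, or appeal to continuity — this does not affect the argument.)
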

\begin{proof}
We prove $\eta(z)=\frac{\dot{r}(z)}{r(z)} \to 0$ $(z \to -\infty)$. 
From Setting RS, we have $r \ddot{r} - \dot{r}^2 > \dot{r}^2$ and compute the derivative of $\eta$, we get 
\begin{align*}
    \dot{\eta} = \frac{r \ddot{r} - \dot{r}^2}{r^2} > \left( \frac{\dot{r}}{r} \right)^2= \eta^2.
\end{align*}
By using this inequality, we have
\begin{align*}
    -\frac{\dot{\eta}}{\eta^2}=\frac{d}{d z} \left( \frac{1}{\eta} \right) < -1
\end{align*}
and integrate it, then
\begin{align*}
    \frac{1}{\eta(z_0)}-\frac{1}{\eta(z)} \leq z - z_0 \to -\infty \quad (z \to -\infty).
\end{align*}
Thus we have $\eta(z) \to 0$ and get $\mathcal{K}(z) = \frac{\eta(z)}{\sqrt{\dot{r}(z)^2 + 1}} \to 0$ $(z \to -\infty)$.
\end{proof}

Finally we show the main theorem.

\begin{proof}[\rm{\bf{Proof of Main Theorem}}]

$(1)$ 
We will prove by contradiction and we suppose that $T<\infty$. 
For any $t_0 < T$, we choose $(x_1,t_1) \in \mathbb{S}^1 \times [0,t_0]$ that satisfies $\mu(t_0) = \max_{(x,t) \in \mathbb{S}^1 \times [0,t_0]}\mathfrak{g}(x,t)=\mathfrak{g}(x_1,t_1)$. 
We can assume $t_1 > 0$. Then from $(\ref{eq:curvature ineq})$ and $\overline{K}<0$, for some positive constant $D$ independent of $t_0$ we get
\begin{align*}
    \mathfrak{g}(x_1,t_1)^2 \leq
    D \,\mathfrak{g}(x_1,t_1)^{\frac{3}{2}}.
\end{align*}
Namely we have
\begin{align}
\label{eq:contradiction}
    \mu(t_0)^2 \leq
    D \, \mu(t_0)^{\frac{3}{2}}.
\end{align}
Now if $t_0 \to T$, then $\mu(t_0)$ would blow up by Lemma\ref{lem:time bounded}, that is, it leads to a contradiction with $(\ref{eq:contradiction})$. 
Thus we get $T=\infty$.

$(2)$ 
We consider the flow $\{\widetilde{f}_t\}_{t \in [0,T)}$ (defined by $(\ref{eq:standard flow})$) with the initial condition $\widetilde{z}_0(x) = z_{\max}(0)$. 
Then for this flow we have $\widetilde{z}_t \to -\infty$ $( t \to \infty )$, and 
therefore $z_t \to - \infty$ follows from the comparison principle.

$(3)$ 
We set $\mathfrak{g}_{\max}(t) = \max_{x \in \mathbb{S}^1} \mathfrak{g}(x,t)$. From the inequality $(\ref{eq:curvature ineq})$, we have 
\begin{align*}
    \frac{d \mathfrak{g}_{\max}}{d t} \leq
    \left( -2k \mathfrak{g}_{\max} +  4 \mathcal{K} \frac{\sqrt{\varphi}}{v^3} \sqrt{\mathfrak{g}_{\max}} + 2 \overline{K} \right) \mathfrak{g}_{\max}.
\end{align*}
$\mathfrak{g}_{\max}$ is bounded from $(\ref{eq:contradiction})$, so we suppose that $\mathfrak{g}_{\max} \to \varepsilon > 0$ $(t \to \infty)$, 
\begin{align*}
    -2k \mathfrak{g}_{\max} +  4 \mathcal{K} \frac{\sqrt{\varphi}}{v^3} \sqrt{\mathfrak{g}_{\max}} + 2 \overline{K} \leq -\widetilde{D} < 0
\end{align*}
holds with some positive constant $\widetilde{D}$ by Lemma \ref{lem:K bounded} and $\overline{K} < 0$. Thus 
\begin{align*}
    \frac{d \mathfrak{g}_{\max}}{d t} \leq - \widetilde{D} \mathfrak{g}_{\max},
\end{align*}
and then we get
\begin{align*}
    0 < \mathfrak{g}_{\max}(t) \leq \mathfrak{g}_{\max}(t_0) e^{-\widetilde{D}(t-t_0)} \to 0 \quad (t \to \infty).
\end{align*}
However this is a contradiction, and therefore $\mathfrak{g}_{\max} \to 0$, that is, $\kappa_t \to 0$.

\end{proof}

\section{Acknowledgements}

I would like to thank my supervisor Naoyuki Koike for helpful conversations and support.


\vspace{0.5truecm}

{\small 
\rightline{Department of Mathematics, Graduate School of Science}
\rightline{Tokyo University of Science, 1-3 Kagurazaka}
\rightline{Shinjuku-ku, Tokyo 162-8601, Japan}
\rightline{(naotoshifujihara@gmail.com)}
}

\end{document}